\newcommand {\ZZ}  {{\mathbb Z}}
\newtheorem{theorem}{Theorem}
\newtheorem{proposition}{Proposition}
\newtheorem{lemma}{Lemma}
\newtheorem{cor}{Corollary}[section]
\newtheorem{rem}{Remark}[section]
\begin{document}
\title[On a correlational clustering of integers]{On a correlational clustering of integers}
\author[L. Aszal\'os, L. Hajdu and A. Peth\H{o}]{L\'aszl\'o Aszal\'os, Lajos Hajdu and Attila Peth\H{o}}
\address{Institute of Mathematics\\ University of Debrecen \\
H-4010 Debrecen, P.O. Box 12, HUNGARY}
\email{hajdul@science.unideb.hu}
\address{Department of Computer Science, University of Debrecen,
H-4010 Debrecen, P.O. Box 12, HUNGARY}
\email{Aszalos.Laszlo@inf.unideb.hu}
\email{Petho.Attila@inf.unideb.hu}
\thanks{Research supported in part by the OTKA grants NK104208, NK101680, K100339 and by the T\'AMOP-4.2.2.C-11/1/KONV-2012-0001 project. The project is implemented through the New Hungary Development
Plan, cofinanced by the European Social Fund and the European Regional Development Fund.}
\date{\today}

\begin{abstract} Correlation clustering is a concept of machine learning. The ultimate goal of such a clustering is to find a partition with minimal conflicts. In this paper we investigate a correlation clustering of integers, based upon the greatest common divisor.
\end{abstract}

\maketitle

\section{Introduction}
Correlation clustering is a concept of machine learning. It was introduced in Bansal et al. \cite{bbc}, which gives a good overview of the mathematical background as well. Let $G$ be a complete graph on $n$ vertices and label its edges with $+$ or $-$ depending on whether the endpoints have been deemed to be similar or different. Consider a partition of the vertices. Two edges are in conflict with respect to the partition if they belong to the same class, but are different, or they belong to different classes although they are similar. The ultimate goal of correlation clustering is to find a partition with minimal number of conflicts. The special feature of this clustering is that the number of clusters is not specified.

A typical application of correlation clustering is the classification of unknown topics of (scientific) papers. In this case the papers represent the nodes and two papers are considered to be similar, if one of them cite the other. The classes of an optimal clustering is then interpreted as the topics of the papers.

The number of partitions of $n$ vertices  grows exponentially,
hence there is no chance for exhaustive search at computer experiments. Bansal et al. \cite{bbc} proved that to find an optimal clustering is NP-hard. They also presented and analyzed algorithms for approximate solutions of the problem. The correlation clustering can be treated as an optimization problem: minimize the number of conflicts by moving the elements between clusters. Hence one can use the traditional and new optimization algorithms to find near optimal partition.
Bak\'o  and Aszal\'os \cite{ab} have implemented the traditional methods and invented some new ones.

Both above cited papers assumes nothing on the signing of the edges. It is natural to expect that the rule of the signing affects the optimal clustering. This expectation motivated the  investigation of the present paper. To define a systematic signing it is straightforward to label the vertices by natural numbers and the sign between two vertices depends on a number theoretical property of the label of these vertices. As each pair of integers has greatest common divisor, it is natural to call two vertices $a\not=b$ similar if $\gcd(a,b)>1$ and different otherwise. In the sequel we prefer to use the words 'friend' and 'enemy' instead of 'similar' and 'different', respectively. Furthermore we refer to this graph as the network of integers. Note that the behavior of the $\gcd$ among the first $n$ positive integers has been investigated from many aspects; see e.g. a paper of Nymann, \cite{ny}.

Bak\'o  and Aszal\'os \cite{ab} have made several experiments on the network of integers. They discovered that the classes of the near optimal clustering have regular structure. In the sequel denote $p_i$ the $i$-th prime, i.e., $p_1=2, p_2=3,\dots$. Set
$$
S_{i,n} = \{m \; : \; m\le n,\; p_i|m,\; p_j\nmid n\; (j< i)\}.
$$
In other words, $S_{i,n}$ is the set of integers at most $n$, which are divisible by $p_i$, but coprime to the smaller primes. Aszal\'os and Bak\'o found that
\begin{equation} \label{sejtes}
[2,n] \cap \ZZ = \bigcup_{j=1}^{\infty} S_{j,n}
\end{equation}
is highly likely to be an optimal correlation clustering for $n\le 500$. Notice that $S_{j,m}=\emptyset$ for all large enough $j$, i.e., the union on the right hand side is actually finite.

The aim of this paper is to show that for $n_0=3\cdot5\cdot7\cdot11\cdot13\cdot17\cdot19\cdot23 = 111~546~435$ the decomposition \eqref{sejtes} is not optimal. We prove that the number of conflicts in
\begin{equation} \label{teny}
[2,n_0] \cap \ZZ = (S_{1,n_0}\cup\{n_0\}) \cup (S_{2,n_0}\setminus \{n_0\}) \bigcup_{j=3}^{\infty} S_{j,n}
\end{equation}
is less than in \eqref{sejtes} with $n=n_0$. Unfortunately, we are not able to verify that \eqref{teny} is optimal for $n<n_0$. However, we can prove that the natural greedy algorithm (Algorithm 1), presented in the next section, produces the clustering \eqref{sejtes} for $n< n_0$, but \eqref{teny} for $n= n_0$. Thus our result shads some light on the difficulty to find optimal clustering of large graphs.

Applying Algorithm~1 for the network of integers the results behave regularly until a certain large point, but then the regularity disappears. From our construction it will be clear that $n_0$ is the first, but not at all the last integer, which behaves irregularly. For example the numbers $3n_0, 5n_0, 9n_0, \dots$ are odd and are divisible by three, but adjoining them to $S_{1,n}$ causes less conflicts than adjoining them to $S_{2,n}$. Denote by $S_{i,n}^*$ the class, which contains $p_i$ and is produced by Algorithm~1. We have no idea whether these sets have some structure and what is their asymptotic behavior. For example does the limit
$$
\lim_{n \to \infty} \frac{|S_{1,n}^*|}{n}
$$
exist? Or is $\limsup_{n \to \infty} \frac{|S_{1,n}^*|}{n} = 1$ or is it smaller?

The paper is organized as follows. In the second section we present Algorithm~1 and the main theorem. The third section is devoted to the proof of combinatorial lemmata and in the last section we prove the theorem.

\section{Main result}

To find an optimal correlational clustering of a labeled network is an NP-hard problem. To find an approximation of the optimal solution, it is natural to use greedy algorithms. For the network of integers we use the following strategy. The optimal clustering for $\{2\}$ is itself. Assume that we have a partition of $N:=[2,n-1]\cap \ZZ$, then adjoin $n$ to that class, which causes the least new conflicts. The result is a locally optimal clustering, which is not necessarily optimal globally.

\begin{algorithm}
\caption{Natural greedy algorithm}
\begin{algorithmic}[1]
\Require{an integer $n\ge 2$}
\Ensure{a partition $\mathcal P$ of $N$}
\State ${\mathcal P}\gets \{\{2\}\}$;
\If {$n=2$} \textbf{return} $\mathcal P$ \EndIf
\State $m\gets 3$
\While{$m\le n$}
	\State ${\mathcal P}_M\gets {\mathcal P}\cup \{\{m\}\}$
	\State $M\gets \Call{conflicts}{ {\mathcal{P}}_M,m }$
	\Comment{the number of conflicts with respect to the partition ${\mathcal P}_M$ caused by the pairs $(m,a),\ a<m$}
  	\State $C\gets\mbox{ number of classes in } {\mathcal P}$
  	\State $j\gets 1$
  	\While{$j\le C$}
		\State $O \gets \Call{op}{ j,{\mathcal P}}$
		 \Comment{$OP(j,\mathcal P)$ denotes the $j$-th class in the partition $\mathcal P$.}
  		\State ${\mathcal P}_1\leftarrow {\mathcal P}\setminus \{O\}$
  		\State ${\mathcal P}_1\leftarrow {\mathcal P}_1 \cup \{O \cup \{m\}\}$
  		\State $M_1 \gets \Call{NuPair}{ {\mathcal P}_1, m}$
  		 \Comment{ the number of pairs $(m,a)$ with $a<m$ causing a conflict in the partition ${\mathcal P}_1$}
  		\If{$M_1<M$}
  			\State $M\gets M_1$
  			\State ${\mathcal P}_M \gets {\mathcal P}_1$
		\EndIf
	\EndWhile
\EndWhile
\State \textbf{return} ${\mathcal P}_M$
\end{algorithmic}
\end{algorithm}

Starting with a partition of $\{2,\dots,n-1\}$ this algorithm establishes a partition of $\{2,\dots,n\}$ such that the conflicts caused by $n$ is minimal. The output of Algorithm~1 on the input $n$ is denoted by $G(n)$. It is a collection of disjoint sets, whose union is $[2,n] \cap \ZZ$. It is easy to see that
\begin{eqnarray*}
G(3) &=& \{\{2\},\{3\}\}\\
G(4) &=& \{\{2,4\},\{3\}\}\\
G(5) &=& \{\{2,4\},\{3\},\{5\}\}\\
G(6) &=& \{\{2,4,6\},\{3\},\{5\}\}\\
\dots\\
G(15) &=& \{\{2,4,6,8,10,12,14\},\{3,9,15\},\{5\},\{7\},\{11\},\{13\}\},
\end{eqnarray*}
moreover they are optimal as well.

Our main result is the following

\begin{theorem} \label{Lajos}
If $m<n_0=3\cdot5\cdot7\cdot11\cdot13\cdot17\cdot19\cdot23 = 111~546~435$ then
\begin{equation}\label{egy}
G(m) = \bigcup_{j=1}^{\infty}S_{j,m}
\end{equation}
is true, but
$$
G(n_0) = (S_{1,n_0}\cup\{n_0\}) \cup (S_{2,n_0}\setminus \{n_0\})\bigcup_{j=3}^{\infty}S_{j,n_0}.
$$
\end{theorem}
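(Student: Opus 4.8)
The plan is to collapse each step of Algorithm~1 into a single scalar comparison and then into a sieve estimate. First I would record the marginal cost of inserting $m$. If $m$ is placed into an existing class $C$ (all of whose elements are $<m$), the pairs $(m,a)$ that become conflicts are the $a\in C$ with $\gcd(a,m)=1$ (enemies in the same class) together with the $a\notin C$, $a<m$, with $\gcd(a,m)>1$ (friends in different classes). Writing $f(m)=\#\{a:2\le a<m,\ \gcd(a,m)>1\}$ and $F_C(m)=\#\{a\in C:\gcd(a,m)>1\}$, the number of new conflicts is $|C|+f(m)-2F_C(m)$, while opening a fresh singleton costs $f(m)$. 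Since $f(m)$ is common to every option, the choice depends only on
\[
g(C):=|C|-2F_C(m),
\]
and Algorithm~1 inserts $m$ into the class minimising $g(C)$ whenever that minimum is negative (ties going to the smaller index, as the algorithm updates $\mathcal P_M$ only on a strict decrease, starting from the singleton value $g=0$), and otherwise opens a new class.

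Next I would prove by induction on $m$ that the partition produced on $[2,m-1]$ equals $\bigcup_j S_{j,m-1}$, and analyse the insertion of $m$ against it. Two cases are immediate: if $m$ is prime then $f(m)=0$ and $g(C)=|C|>0$ for every class, so $m$ opens the new class $S_{i,m}$ with $p_i=m$; if $m$ is even then every element of $S_{1,m-1}$ shares the factor $2$ with $m$, whence $F_{S_1}(m)=|S_{1,m-1}|$, $g(S_1)=-|S_{1,m-1}|$, and since $S_1$ is the largest class this is the strict minimum. The substance is the odd composite case: let $p_i$ be the least prime of $m$ (so $i\ge2$). Every element of $S_{i,m-1}$ is divisible by $p_i\mid m$, so $F_{S_i}(m)=|S_{i,m-1}|$ and $g(S_i)=-|S_{i,m-1}|$; the same holds for every divisor class $S_k$ ($p_k\mid m$), and as $|S_{k,m-1}|$ is strictly decreasing in $k$, $S_i$ beats all of these. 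A class can undercut $S_i$ only if it is larger than it, i.e. $S_1,\dots,S_{i-1}$; for $2\le k<i$ one has $p_k\nmid m$, so $F_{S_k}(m)$ counts only elements of $S_k$ divisible also by a prime $\ge p_i$ of $m$, and a crude sieve bound (to be dispatched first, as routine) shows these are too sparse to force $g(S_k)<g(S_i)$. This leaves the even class $S_1$ as the only genuine competitor, so the step is decided by the sign of
\[
D(m):=g(S_1)-g(S_i)=|S_{1,m-1}|-2F_{S_1}(m)+|S_{i,m-1}|.
\]

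To locate the threshold I would rewrite $F_{S_1}(m)$ by complementary counting: an even $a<m$ is an enemy of $m$ iff $a=2b$ with $b\le(m-1)/2$ coprime to the odd radical $r(m)=\prod_{p\mid m,\,p>2}p$, so with $\Phi(x,r)=\#\{b\le x:\gcd(b,r)=1\}$ one gets $F_{S_1}(m)=\tfrac{m-1}{2}-\Phi(\tfrac{m-1}{2},r(m))$ and hence
\[
D(m)=-\frac{m-1}{2}+2\,\Phi\!\left(\frac{m-1}{2},r(m)\right)+|S_{i,m-1}|.
\]
Expanding $\Phi$ and $|S_{i,m-1}|$ by inclusion--exclusion as finite alternating sums of floor functions, $D(m)$ becomes an explicit expression in the factorisation of $m$ with main term $m\left(-\tfrac12+\prod_{p\mid m,\,p>2}(1-\tfrac1p)+\tfrac1{p_i}\prod_{j<i}(1-\tfrac1{p_j})\right)$ and remainder $O(2^{\omega(m)})$. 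For $3\mid m$ we have $i=2$ and the last term is $\tfrac16$, so the sign of the main term is decided by whether $\prod_{p\mid m,\,p>2}(1-\tfrac1p)$ is below $\tfrac13$; I would then observe that $3\cdot5\cdots19$ still yields a product exceeding $\tfrac13$, that including the next prime $23$ drops it under $\tfrac13$, and that using larger or fewer primes only enlarges both the product and the integer, so $n_0=3\cdot5\cdots23$ is exactly the least integer crossing the threshold (the cases $3\nmid m$ and $m$ even crossing, if at all, only beyond $n_0$). A single inclusion--exclusion evaluation then gives $D(n_0)<0$, and $D(m)>0$ for every odd $m<n_0$, which together with the induction yields both halves of the theorem.

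The hard part will be the uniform positivity $D(m)>0$ across the entire range of roughly $10^8$ odd integers of every factorisation type, together with the companion bounds ruling out the classes $S_k$ with $2\le k<i$ and $p_k\nmid m$. Here the $O(2^{\omega(m)})$ error in the inclusion--exclusion must be controlled against a main term that can be small (for instance only about $0.009\,m$ when $r(m)=3\cdot5\cdots19$), and one must verify that no non-squarefree or otherwise atypical $m<n_0$ slips below zero first. Reducing this to finitely many extremal radicals, proving that $\prod_{p\mid m}(1-1/p)$ first falls under $\tfrac13$ precisely at $n_0$, is where the combinatorial lemmata of Section~3 carry the load; by contrast the inequality $D(n_0)<0$ enjoys a comfortable margin (about $-6.8\times10^{5}$) and is the easy end of the argument.
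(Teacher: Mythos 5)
Your reduction of each greedy step to minimising $g(C)=|C|-2F_C(m)$ is equivalent to the paper's Lemma~\ref{hovategyuk} (indeed $|B_{j,n}|-|E_{j,n}|=-g(S_j)$), your even and prime cases match Corollary~\ref{newcor}, and your threshold analysis for $3\mid m$ --- the criterion $\varphi(m)<m/3$, equivalently $\prod_{p\mid m,\,p>3}(1-1/p)<1/2$, first satisfied at $n_0$ --- is essentially the paper's proof of the second assertion. The genuine gap is the step you set aside as ``routine'': showing that for odd $m$ with least prime factor $p_i$, $i\ge 3$, no class $S_k$ with $2\le k<i$ undercuts $S_i$. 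Inclusion--exclusion is not a usable tool there, and your error bound $O(2^{\omega(m)})$ is not even correct for your own $D(m)$: expanding $|S_{i,m-1}|$ requires sieving by \emph{all} primes $p_1,\dots,p_{i-1}$ (Lemma~\ref{Sjelemszam}), and the comparison with $S_k$ requires sieving by $p_1,\dots,p_{k-1}$ together with the primes dividing $m$ (Lemma~\ref{Bjn}), so the errors are $2^{i-2}$ and $2^{t+k-2}$, where $i$ is the \emph{index} of the least prime factor, not the number of prime factors of $m$. When that least prime factor is large --- say $m=pq$ with $p,q\approx 10^4$, so that $i=\pi(p)\approx 1229$ --- the term $2^{i-2}$ exceeds every main term by hundreds of orders of magnitude, and no crude sieve bound can decide anything. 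This is exactly why the paper, after exploiting the sieve estimates only for $3\le i\le 19$ (the computed thresholds $n_1(i,j,t)$ of Table~\ref{T2}), needs three further, genuinely different arguments: Bertrand-postulate style injections of friends into enemies for $m=p^\alpha$ and $m=p^\alpha q^\beta$ (Lemmata~\ref{primhatvany} and~\ref{ketprim}), a direct computer enumeration for $m$ with three prime divisors (Table~\ref{T3}), and a Rosser--Schoenfeld prime-counting argument for $m$ composed entirely of primes $\ge 41$ (the Proposition and Lemma~\ref{lemnew1}). Without supplying analogues of these, your proof does not cover, for instance, any $m<n_0$ that is a product of two primes both exceeding $67$.

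You have also inverted where the difficulty sits. The comparison with $S_1$, which you flag as ``the hard part'' requiring uniform positivity of $D(m)$ over roughly $10^8$ integers, is in fact the tame one: the paper's Lemma~\ref{fontos} gives $|E_{1,n}|=\varphi(n)/2$ \emph{exactly}, via the symmetry $h\mapsto n-h$, so $|B_{1,n}|-|E_{1,n}|=\frac{n-1}{2}-\varphi(n)$ carries no error term at all, and for $3\mid n$ the class $|S_{2,n-1}|=\frac{n-3}{6}$ is also exact; the whole $S_1$-versus-$S_i$ question collapses to the clean inequality $\varphi(n)<n/3$ (and when the prime factors of $n$ are large, $\frac{n-1}{2}-\varphi(n)$ is negative, so $S_1$ is never in the running). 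The combinatorial lemmata of Section~3 do not, as you hope, ``carry the load'' for the comparisons with $S_k$, $2\le k<i$; the load is carried by the case-by-case machinery of Section~4, which your proposal omits.
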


\section{Auxiliary results}

To prove the main theorem we need some preparation. Throughout this paper the number of elements of a set $A$ will be denoted by $|A|$. In the first lemma we characterize that class of $G(n-1)$ to which Algorithm~1 adjoins $n$.

\begin{lemma}\label{hovategyuk}
Let $n>2$ be an integer. Write $G(n-1)=\{P_1,\dots, P_M\}$ and set $P_0=\emptyset$. For $1\le j\le M$ let
$$
E_{j,n} =\{m\;:\; m\in P_j, \gcd(m,n)=1\}
$$
and
$$
B_{j,n} =\{m\;:\; m\in P_j, \gcd(m,n)>1\}.
$$
Define $E_{0,n}=B_{0,n}=\emptyset$. Let $J$ be the smallest index for which $|B_{j,n}|-|E_{j,n}|$ $(j=0,\dots,M)$ is maximal. Then $G(n)=\{P'_0,\dots,P'_M\}$ such that
\[
P'_j =
\begin{cases}
P_j\cup\{n\},& \mbox{if}\; j=J,\\
P_j,& \mbox{otherwise}.
\end{cases}
\]
\end{lemma}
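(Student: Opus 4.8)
The plan is to translate the greedy choice made by Algorithm~1 into the single quantity $|B_{j,n}|-|E_{j,n}|$, and then simply read off which class is selected. The key observation is that adjoining $n$ to a \emph{fixed} partition $G(n-1)$ does not alter any conflict among the elements $<n$; those conflicts are the same no matter where $n$ is placed. Hence minimizing the total number of conflicts of the resulting partition is equivalent to minimizing the number of \emph{new} conflicts, namely those arising from pairs $(m,n)$ with $m<n$. This is precisely the quantity evaluated by the inner loop (the calls to \textsc{conflicts} and \textsc{NuPair}), so it suffices to compare these counts across the candidate classes.

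First I would compute, for each candidate class $P_j$ with $1\le j\le M$, the number of new conflicts produced when $n$ is adjoined to $P_j$. A pair $(m,n)$ with $m<n$ is in conflict precisely when $m$ and $n$ lie in the same class but are enemies ($\gcd(m,n)=1$), or lie in different classes but are friends ($\gcd(m,n)>1$). Adjoining $n$ to $P_j$ therefore creates $|E_{j,n}|$ same-class enemy conflicts together with $\sum_{k\ne j}|B_{k,n}|$ different-class friend conflicts. Writing $T=\sum_{k=1}^{M}|B_{k,n}|$ for the total number of friends of $n$ in $[2,n-1]\cap\ZZ$, the number of new conflicts equals
\[
\mathrm{Conf}(j)=|E_{j,n}|+\bigl(T-|B_{j,n}|\bigr)=T-\bigl(|B_{j,n}|-|E_{j,n}|\bigr).
\]
The case $j=0$, corresponding to opening the fresh singleton class $\{n\}$ tested at the start of the inner loop, gives $\mathrm{Conf}(0)=T$, consistent with $|B_{0,n}|-|E_{0,n}|=0$ by the convention $E_{0,n}=B_{0,n}=\emptyset$. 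Since $T$ does not depend on $j$, minimizing $\mathrm{Conf}(j)$ over $j\in\{0,\dots,M\}$ is exactly maximizing $|B_{j,n}|-|E_{j,n}|$, which identifies the target class as claimed.

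It then remains to match the tie-breaking convention, that $J$ is the \emph{smallest} index attaining the maximum. Here I would trace the control flow of Algorithm~1: the singleton option (index $0$) furnishes the initial values of $M$ and $\mathcal P_M$, and the loop tests $j=1,2,\dots,C$ in increasing order, replacing the current best only when $M_1<M$ holds \emph{strictly}. Consequently a later class never displaces an earlier one with the same conflict count, so the first index realizing the minimum of $\mathrm{Conf}(j)$ — equivalently the first index realizing the maximum of $|B_{j,n}|-|E_{j,n}|$ — is the one retained. This reproduces the stated description of $P'_0,\dots,P'_M$.

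The argument is essentially bookkeeping rather than a deep computation. The only points demanding care are the correct split of the new conflicts into the two types (same-class enemies versus different-class friends) and the verification that the strict inequality in the update step, together with the order in which the candidates are examined, faithfully implements the ``smallest index'' rule; I do not anticipate a genuine obstacle beyond keeping these conventions straight.
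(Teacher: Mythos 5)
Your proposal is correct and follows essentially the same route as the paper: both count the new conflicts created by placing $n$ into $P_j$ as $|E_{j,n}|+\sum_{k\ne j}|B_{k,n}|$, observe that this equals a constant minus $|B_{j,n}|-|E_{j,n}|$ so that minimizing conflicts is maximizing that difference, and then invoke the algorithm's first-strict-improvement update rule to get the smallest-index tie-break. Your writeup is in fact slightly cleaner on the tie-breaking, which the paper asserts rather than traces through the control flow, but the underlying argument is identical.
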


\begin{proof}
Let $K_j$ denote the number of new conflicts, which arise adjoining $n$ to $P_j$ $(j=0,\dots,M)$. Then
$$
K_j =|E_{j,n}|+\sum_{k=0 \atop k\not=j}^M |B_{k,n}|.
$$
Algorithm~1 adjoins $n$ to that $P_{\hat{J}}$ for which $K_{\hat{J}}$ is minimal and if there are more indices $j$ with minimal $K_j$ then $\hat{J}$ is minimal among them. This means that if $K_k\not=K_{\hat{J}}$ and $m\not=k$ then $K_k-K_{\hat{J}} \ge K_k-K_m$. This is equivalent to
$$
|E_{k,n}| + |B_{\hat{J},n}| - |E_{\hat{J},n}| - |B_{k,n}| \ge |E_{k,n}| + |B_{m,n}| - |E_{m,n}| - |B_{k,n}|
$$
and further to
$$
|B_{\hat{J},n}| - |E_{\hat{J},n}| \ge |B_{m,n}| - |E_{m,n}|.
$$
Thus $|B_{m,n}| - |E_{m,n}|$ $(m=0,\dots,M)$ assumes its maximal value at $m=\hat{J}$ and $\hat{J}$ is minimal among the indices with this property. Hence $J=\hat{J}$ and the lemma is proved.
\end{proof}

\begin{cor}\label{newcor} The following assertions are true.
\begin{itemize}
\item[(1)] If $n$ is even, then $n\in S_{1,n}$.
\item[(2)] If $n$ is a prime, then $\{n\}\in G(n)$.
\item[(3)] If the smallest prime factor of $n$ is $p_i$ and $n\in S_{j,n}$, then $j\leq i$.
\end{itemize}
\end{cor}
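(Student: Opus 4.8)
The plan is to derive each of the three assertions directly from Lemma~\ref{hovategyuk}, which tells us that Algorithm~1 adjoins $n$ to the class $P_J$ maximizing $|B_{j,n}|-|E_{j,n}|$ (with ties broken toward the smallest index, and the empty class $P_0$ as a fallback). The key observation throughout is that $B_{j,n}$ counts the ``friends'' of $n$ in $P_j$ (those sharing a common factor with $n$) while $E_{j,n}$ counts the ``enemies'' (those coprime to $n$), so I always want to locate the class richest in friends and poorest in enemies.

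For assertion~(1), suppose $n$ is even. I would argue by induction that at the moment $n$ is processed, the class $S_{1,n-1}$ consists exactly of the even numbers below $n$, so every one of its elements is a friend of $n$ and hence $|E_{1,n}|=0$ while $|B_{1,n}|=|S_{1,n-1}|$, the count of even numbers in $[2,n-1]$. Any other class $P_j$ $(j\geq 2)$ consists of odd numbers, so its friends of $n$ are only those sharing an odd prime factor with $n$; since $n$ is even, one checks $|B_{j,n}|-|E_{j,n}|\leq |B_{j,n}|\leq |P_j|$, and because the even numbers vastly outnumber the elements of any single odd class (indeed $S_{1,n-1}$ contains nearly half of $[2,n-1]$), the quantity $|B_{1,n}|-|E_{1,n}|$ dominates. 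Thus $J=1$ and $n$ joins $S_{1,n}$. For assertion~(2), suppose $n$ is prime. Then $\gcd(m,n)=1$ for every $m<n$, so $n$ has no friends at all: $B_{j,n}=\emptyset$ and $|B_{j,n}|-|E_{j,n}|=-|E_{j,n}|\leq 0$ for every existing class, with equality to $0$ only for the empty fallback class $P_0$. Hence the maximum is attained at $j=0$, the algorithm opens a new singleton, and $\{n\}\in G(n)$.

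For assertion~(3), let $p_i$ be the smallest prime factor of $n$ and suppose $n\in S_{j,n}$, i.e.\ $n$ joins the class $S_{j,n-1}^{*}$ containing $p_j$. I would observe that by the inductive structure the class indexed $j$ is the one built around the prime $p_j$; for $n$ to be placed there, $n$ must have at least one friend in that class (otherwise assertion~(2)'s reasoning would push $n$ into the empty class or a class with more friends). A friend means a shared prime factor; since all prime factors of $n$ are at least $p_i$, and the class $S_{j,n-1}^{*}$ is generated by numbers whose smallest prime factor is $p_j$, sharing a factor forces $p_j\leq$ some prime factor of $n$. Combined with the greedy preference and the fact that $n$'s smallest factor is $p_i$, this yields $j\leq i$. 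The main obstacle I anticipate is making the inductive identification of the classes precise: asserting that the class of index $j$ really is built around $p_j$ and that its elements have the divisibility profile described by $S_{j,\cdot}$ requires carrying an induction hypothesis on the full structure of $G(n-1)$, and the tie-breaking rule (smallest index wins) must be tracked carefully so that a number with several admissible classes lands in the one with the correct, smallest, index.
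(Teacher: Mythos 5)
Your treatments of (1) and (2) follow the paper's own route and are essentially correct. For (2) the argument is complete: a prime $n$ has no friends, so every nonempty class scores $-|E_{j,n}|<0$ while the empty class $P_0$ scores $0$, and Lemma \ref{hovategyuk} forces a new singleton. For (1) you are vaguer than you need to be: ``vastly outnumber'' must be replaced by the explicit bound that, under the inductive hypothesis $G(n-1)=\{S_{1,n-1},\dots,S_{M,n-1}\}$, each class with $j\ge 2$ consists of multiples of $p_j\ge 3$, hence
$$
|B_{j,n}|-|E_{j,n}|\le |S_{j,n-1}|\le \frac{n-1}{3} < \frac{n}{2}-1 = |B_{1,n}|-|E_{1,n}|
$$
for $n\ge 6$ (small cases checked directly); merely knowing that the other classes contain only odd numbers would not suffice, since the odd numbers as a whole are as numerous as the even ones. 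Still, this is a matter of filling in a bound, not of a wrong idea.

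Part (3), however, has a genuine gap. Your argument runs: $n$ can only be placed in a class where it has a friend; a friend means a shared prime factor; the prime-factor structure then ``forces'' $j\le i$. The last step is a non sequitur, and the implication you would need is false: $n$ can perfectly well have friends in classes $S_{j,n-1}$ with $j>i$. Take $n=15$, so $p_i=3$, $i=2$: the class $S_{3,14}=\{5\}$ contains the friend $5$ of $15$, yet $3>2$. A shared prime $q$ only satisfies $q\ge p_j$ and $q\ge p_i$, which says nothing about the relative order of $i$ and $j$. What actually forces $j\le i$ is a comparison of scores that your proof never makes, and which is the content of the paper's proof: since $p_i$ is the smallest prime factor of $n$, \emph{every} element of $S_{i,n-1}$ is divisible by $p_i$, so $B_{i,n}=S_{i,n-1}$, $E_{i,n}=\emptyset$, and class $i$ scores exactly $|S_{i,n-1}|$; whereas for $j>i$ one has $|B_{j,n}|-|E_{j,n}|\le |S_{j,n-1}|\le |S_{i,n-1}|$ (the map $p_jk\mapsto p_ik$ injects $S_{j,n-1}$ into $S_{i,n-1}$). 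Thus class $i$ weakly dominates every class of larger index, and the tie-breaking rule of Lemma \ref{hovategyuk} (the smallest maximizing index wins) yields $J\le i$. Without this domination argument the conclusion does not follow from the mere existence of a friend.
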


\begin{proof}
We start with noting that as we pointed out earlier, the assertions hold for $n\le 10$. That is, we have
$$
G(n-1) = \{S_1,\dots,S_M\},
$$
where, for simplicity, we set $S_j = S_{j,n-1}$ $(j=1,\dots,M)$. Put $S_0=\emptyset$.

(1) If $n$ is even then $B_{1,n}=S_1$, thus $|B_{1,n}| = n/2 -1$. If $2\le j\le M$ then $B_{j,n}\subseteq S_j$, thus $|B_{j,n}|\le [(n-1)/p_j] < n/3$. As $n/2-1 > n/3$ for $n\ge 8$ we have
$$
|B_{1,n}|-|E_{1,n}| > |B_{j,n}|-|E_{j,n}|\ \ (j=2,\dots,M).
$$
Hence Algorithm~1 adjoins $n$ to $S_1$, i.e., to the class of even numbers.

(2) If $n=2$ then (2) holds by Step~1. of Algorithms~1. Let $n$ be an odd prime, $G(n-1)=\{S_{1},\dots,S_{M}\}$ and $S_0=\emptyset$. By Lemma \ref{hovategyuk} Algorithm~1 adjoins $n$ to that $S_J$ for which $|B_{j,n}|-|E_{j,n}|$ $(j=0,\dots,M)$ is maximal.

Since $n$ is a prime, $B_{j,n}=\emptyset$ for all $j=1,\dots,M$. Thus $|B_{j,n}|-|E_{j,n}|<0$ $(j =1,\dots,M)$, but $|B_{0,n}|-|E_{0,n}|=0$. Hence $n$ will be adjoined to the empty set, i.e. it will form alone a class in $G(n)$.

(3) We may assume that $n$ is odd and composite. Let $n=q_1^{\alpha_1}\cdots q_t^{\alpha_t}$, where $q_1<\dots<q_t$ are odd primes and $\alpha_1,\dots,\alpha_t$ are positive integers. We obviously have $\{q_1,\dots,q_t\}\subseteq \{p_1,\dots,p_M\}$.

Assume that $q_1 = p_i$. Then every elements of $S_i$ is divisible by $q_1$. Thus $B_{i,n}=S_i$ and $E_{i,n}= \emptyset$, hence $|B_{i,n}|-|E_{i,n}|= |S_i|$.

If $j>i$ then $|B_{j,n}|-|E_{j,n}|\le |S_j|\le|S_i|$. Thus, by Lemma \ref{hovategyuk}, if $n$ will be adjoined to $S_j$ then $j\le i$.
\end{proof}

The next lemma describes a simple, but useful property of the integer part function.

\begin{lemma} \label{egeszresz}
Let $q_1,\dots,q_t$ be pairwise different odd primes, $\alpha_1,\dots,\alpha_t$ positive integers. Let $u$ be a positive integer coprime to $q_i$ $(i=1\dots,t)$ and $n=q_1^{\alpha_1}\cdots q_t^{\alpha_t}$. If $\{i_1,\dots,i_k\}\subseteq \{1,\dots,t\}$ then
\begin{equation} \label{egyegeszresz}
\left[ \frac{n-1}{u q_{i_1}\cdots q_{i_k}} \right] = \left[\frac{\frac{n}{q_{i_1}\cdots q_{i_k}}-1} {u} \right].
\end{equation}
In particular, if $u=2$ then
\begin{equation} \label{ketegeszresz}
\left[ \frac{n-1}{2 q_{i_1}\cdots q_{i_k}} \right] = \frac{n-q_{i_1}\cdots q_{i_k}}{2 p_{i_1}\cdots q_{i_k}} = \frac{n}{2 q_{i_1}\cdots q_{i_k}} - \frac12.
\end{equation}
\end{lemma}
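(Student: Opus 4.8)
The plan is to reduce both displayed equations to the elementary nesting property of the integer part function, namely that for every real $x$ and every positive integer $u$ one has $[x/u]=\bigl[[x]/u\bigr]$. First I would record this property; its proof is routine, for writing $[x]=qu+r$ with $0\le r\le u-1$ and $x=[x]+\{x\}$ with $0\le\{x\}<1$, one has $0\le r+\{x\}<u$, so both $[x/u]$ and $\bigl[[x]/u\bigr]$ equal $q$.

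To establish \eqref{egyegeszresz}, set $Q=q_{i_1}\cdots q_{i_k}$. Since $\{i_1,\dots,i_k\}\subseteq\{1,\dots,t\}$ and $n=q_1^{\alpha_1}\cdots q_t^{\alpha_t}$, the number $Q$ divides $n$, so $n/Q$ is a positive integer. Because $Q\ge q_{i_1}\ge 3$, we have $0<1/Q<1$, whence
$$
\left[\frac{n-1}{Q}\right]=\left[\frac{n}{Q}-\frac1Q\right]=\frac{n}{Q}-1 .
$$
Applying the nesting property with the real number $(n-1)/Q$ in the role of $x$, together with this computation, then yields
$$
\left[\frac{n-1}{uQ}\right]=\left[\frac{(n-1)/Q}{u}\right]=\left[\frac{[(n-1)/Q]}{u}\right]=\left[\frac{n/Q-1}{u}\right],
$$
which is precisely \eqref{egyegeszresz}.

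For the special case \eqref{ketegeszresz} I would put $u=2$ and invoke parity. As $n$ is a product of odd primes it is odd, and $Q$ is odd for the same reason; hence the integer $n/Q$ is odd and $n/Q-1$ is even. Therefore the outer integer part in \eqref{egyegeszresz} is superfluous, and
$$
\left[\frac{n-1}{2Q}\right]=\frac{n/Q-1}{2}=\frac{n-Q}{2Q}=\frac{n}{2Q}-\frac12 .
$$
(The symbol $p_{i_1}$ in the middle denominator of \eqref{ketegeszresz} is evidently a misprint for $q_{i_1}$.)

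I do not expect any genuine obstacle: the statement is an elementary identity for the integer part. The only place demanding a little attention is the parity step in the \emph{in particular} part, where one must notice that the quotient of the two odd numbers $n$ and $Q$ is again odd, which is exactly what forces $2\mid(n/Q-1)$ and lets the floor be dropped.
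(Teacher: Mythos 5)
Your proof is correct, but it is organized differently from the paper's. The paper argues directly: it writes $\left[\frac{n-1}{uQ}\right]=\frac{n-m}{uQ}$, where $Q=q_{i_1}\cdots q_{i_k}$ and $m$ is the smallest positive integer making this fraction an integer (i.e.\ the least positive residue of $n$ modulo $uQ$), then notes that $q_{i_j}\mid n$ and $q_{i_j}\mid n-m$ force $q_{i_j}\mid m$, hence $Q\mid m$; writing $m=Qm'$ turns the fraction into $\frac{n/Q-m'}{u}$ with $m'$ minimal, which is \eqref{egyegeszresz}, and for $u=2$ it identifies $m=Q$ outright because $n-Q$ is even. You instead factor the argument through the textbook nesting identity $[x/u]=\bigl[[x]/u\bigr]$, evaluate the inner floor exactly as $\left[\frac{n-1}{Q}\right]=\frac{n}{Q}-1$, and settle $u=2$ by the parity of $n/Q$. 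The two mechanisms coincide at bottom --- everything hinges on $Q\mid n$ --- but your modular version is arguably cleaner and makes transparent where that hypothesis enters, while the paper's self-contained version hands you the exact least residue $m$, which is what its $u=2$ step uses. Two small remarks: you are right that $p_{i_1}$ in \eqref{ketegeszresz} is a misprint for $q_{i_1}$; and in the paper's application of the lemma (in the proof of Lemma \ref{Bjn}) the degenerate case $k=0$, i.e.\ $Q=1$, does occur, where your inequality $0<1/Q<1$ fails --- but there \eqref{egyegeszresz} is a tautology, so nothing is lost.
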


\begin{proof} We have
$$
\left[ \frac{n-1}{u q_{i_1}\cdots q_{i_k}} \right] = \frac{n-m}{u q_{i_1}\cdots q_{i_k}},
$$
where $m$ is the smallest positive integer such that the fraction on the right hand side is an integer. As $q_{i_j}|n$, we must have $q_{i_j}|m$ $(j=1,\dots,k)$ as well, which implies $q_{i_1}\cdots q_{i_k} | m$. This proves \eqref{egyegeszresz}.

If $u=2$ then $m = q_{i_1}\cdots q_{i_k}$ is the smallest positive integer with the required property, because $n-m$ is even.
\end{proof}

The next lemma gives a good approximation for the size of $S_{i,u}$.

\begin{lemma} \label{Sjelemszam}
Let $u$ be an odd integer. Then we have $|S_{1,u}| = \frac{u-1}{2}$. Further, if $p_i$ is an odd prime, then
$$
\left||S_{i,u}|- \frac{u}{p_i} \prod_{\ell=1}^{i-1} \left(1-\frac1{p_{\ell}}\right)\right| \le 2^{i-2}.
$$
\end{lemma}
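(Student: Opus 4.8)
The plan is to establish the two assertions separately. The equality $|S_{1,u}|=\frac{u-1}{2}$ is immediate: there is no prime smaller than $p_1=2$, so $S_{1,u}$ is exactly the set of even integers in $[2,u]$, and since $u$ is odd these are $2,4,\dots,u-1$, of which there are $(u-1)/2$.

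For the second assertion I would first reduce the cardinality to a coprimality count. Put $P=p_1\cdots p_{i-1}$. Every $m\in S_{i,u}$ can be written as $m=p_ik$ with $1\le k\le u/p_i$, and because $p_\ell\ne p_i$ for $\ell<i$, the requirement $p_\ell\nmid m$ $(\ell<i)$ is equivalent to $\gcd(k,P)=1$. Hence $|S_{i,u}|$ is the number of integers $k\le u/p_i$ coprime to $P$. Inclusion--exclusion over the divisors of $P$, using that the number of multiples of $d$ not exceeding $u/p_i$ equals $[u/(p_id)]$, then yields
$$
|S_{i,u}|=\sum_{d\mid P}\mu(d)\left[\frac{u}{p_id}\right].
$$

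Next I would split off the main term. Since $\mu(d)/d$ is multiplicative and $P$ is the product of the first $i-1$ primes, $\sum_{d\mid P}\mu(d)/d=\prod_{\ell=1}^{i-1}(1-1/p_\ell)$, so the main term is exactly $\frac{u}{p_i}\prod_{\ell=1}^{i-1}(1-1/p_\ell)$ and, writing $\{x\}=x-[x]$ for the fractional part, the error becomes
$$
|S_{i,u}|-\frac{u}{p_i}\prod_{\ell=1}^{i-1}\left(1-\frac1{p_\ell}\right)=\sum_{d\mid P}\mu(d)\left(\left[\frac{u}{p_id}\right]-\frac{u}{p_id}\right)=-\sum_{d\mid P}\mu(d)\left\{\frac{u}{p_id}\right\}.
$$

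The only genuine point is to bound this alternating sum of fractional parts by $2^{i-2}$ rather than by the trivial $2^{i-1}$. The decisive observation is that $P$ is squarefree with exactly $i-1$ distinct prime factors, so among its $2^{i-1}$ divisors precisely $2^{i-2}$ satisfy $\mu(d)=+1$ and $2^{i-2}$ satisfy $\mu(d)=-1$. Thus the error is the difference of two sums, each of $2^{i-2}$ numbers lying in $[0,1)$; each sum lies in $[0,2^{i-2})$, so their difference lies in $(-2^{i-2},2^{i-2})$, which gives the claimed estimate (in fact with strict inequality). I expect this sign count to be the whole difficulty; note that it does not even use that $u$ is odd. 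A sharper constant could be obtained by pairing each odd divisor $d\mid P$ with $2d$ and using the oddness of $u$ to show $\left|\{u/(2p_id)\}-\{u/(p_id)\}\right|\le\frac12$, but the cruder sign count already suffices.
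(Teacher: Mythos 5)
Your proof is correct and follows essentially the same route as the paper: the paper also applies inclusion--exclusion (the sieve formula) to get an exact expression $|S_{i,u}|=\sum_{\ell=0}^{i-1}(-1)^{\ell}\sum_{1\le i_1<\dots<i_{\ell}<i}\left[\frac{u}{p_i p_{i_1}\cdots p_{i_{\ell}}}\right]$, extracts the same main term, and bounds the error by exactly your counting argument, namely that the terms with sign $+1$ and with sign $-1$ each number $2^{i-2}$. Your M\"obius-function formulation over divisors of $P=p_1\cdots p_{i-1}$ is just a notational repackaging of the paper's sum over subsets of $\{p_1,\dots,p_{i-1}\}$.
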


\begin{proof}
The first statement is obvious. To prove the second one we start with the identity
\begin{eqnarray*}
S_{i,u} &=& \{m\;:\; m\le u,\; p_i|m,\; p_{\ell}\nmid m\; \mbox{for all} \; 1\le \ell < i\}\\
&=& \{m\;:\; m\le u, p_i|m\} \setminus \bigcup_{\ell=1}^{i-1} \{m\;:\; m\le u, p_i\cdot p_{\ell} | m\}.
\end{eqnarray*}
In the remaining of the proof we assume that the elements of the occurring sets are at most $u$. The sieve formula implies
\begin{eqnarray*}
|S_{i,u}| &=& |\{m\;:\; p_i|m\}|\\&
-&\sum_{\ell=1}^{i-1} (-1)^{\ell-1} \sum_{1\le i_1<\dots < i_{\ell}< i}|\{m\;: \; p_i\cdot p_{i_1}|m\}\cap\dots \cap \{m\;:\; p_i\cdot p_{i_{\ell}}|m\}|\\
&=& |\{m\;:\; p_i|m\}| + \sum_{\ell=1}^{i-1} (-1)^{\ell} \sum_{1\le i_1<\dots < i_{\ell}< i}|\{m\;:\; p_i\cdot p_{i_1}\cdots p_{i_{\ell}}|m\}|\\
&=& \sum_{\ell=0}^{i-1} (-1)^{\ell} \sum_{1\le i_1<\dots < i_{\ell}< i}|\{m\;:\; p_i\cdot p_{i_1}\cdots p_{i_{\ell}}|m\}|.
\end{eqnarray*}
Thus
\begin{equation} \label{haromegeszresz}
|S_{i,u}| = \sum_{\ell=0}^{i-1} (-1)^{\ell} \sum_{1\le i_1<\dots < i_{\ell}< i} \left[\frac{u}{p_i\cdot p_{i_1}\cdots p_{i_{\ell}}}\right].
\end{equation}
Using $x-1< [x]\le x$ we obtain
$$
- \sum_{\ell=0\atop {\ell \text{is even}} }^{i-1} {i-1 \choose \ell}\le |S_{i,u}| - \frac{u}{p_i} \prod_{\ell=0}^{i-1} \left(1-\frac1{p_{\ell}}\right) < \sum_{\ell=0\atop {\ell \text{is odd}} }^{i-1} {i-1 \choose \ell}.
$$
As
$$
\sum_{\ell=0\atop {\ell \text{is even}}}^{i-1} {i-1 \choose \ell} = \sum_{\ell=0\atop {\ell \text{is odd}} }^{i-1} {i-1 \choose \ell} = 2^{i-2},
$$
the lemma is proved.
\end{proof}

In the next lemma we prove an estimation for $|B_{j,n}|-|E_{j,n}|$, where
$$
B_{j,n} = \{m\; : \; m \in S_{j,n-1},\; \gcd(m,n)>1\}
$$
and
$$
E_{j,n} = \{m\; : \; m \in S_{j,n-1},\; \gcd(m,n)=1\}.
$$
The elements of $B_{j,n}$ and $E_{j,n}$ are the friends and enemies of $n$ in $S_{j,n-1}$, respectively.

\begin{lemma} \label{Bjn}
Let $q_1<\dots<q_t$ be odd primes, $\alpha_1,\dots,\alpha_t$ positive integers and $n=q_1^{\alpha_1}\cdots q_t^{\alpha_t}$. Let $j\ge2$ be such that $p_j<q_1$. Then
$$
\left||B_{j,n}|-|E_{j,n}| - \frac{n-1}{p_j} \prod_{\ell=1}^{j-1}\left(1-\frac1{p_{\ell}} \right) \left( 1- 2\prod_{k=1}^t\left(1-\frac1{q_k}\right)\right)\right|  \le 2^{t+j-2}.
$$
\end{lemma}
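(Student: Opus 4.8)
The plan is to exploit that $B_{j,n}$ and $E_{j,n}$ partition $S_{j,n-1}$, so that
$$
|B_{j,n}| - |E_{j,n}| = |S_{j,n-1}| - 2|E_{j,n}|,
$$
and then to evaluate both quantities by inclusion--exclusion. Write $D=p_1\cdots p_{j-1}$ and $Q=q_1\cdots q_t$; since $p_j<q_1$, the prime $p_j$ is coprime to $D$ and to $Q$, and the primes $p_1,\dots,p_{j-1},q_1,\dots,q_t$ are pairwise distinct. An integer $m$ lies in $E_{j,n}$ exactly when $p_j\mid m$, $\gcd(m,D)=1$ and $\gcd(m,Q)=1$. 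Sieving out the divisors of $Q$ and then of $D$, and using \eqref{egyegeszresz} (since every $d_2\mid Q$ divides $n$) to rewrite $\left\lfloor\frac{n-1}{p_jd_1d_2}\right\rfloor=\left\lfloor\frac{n/d_2-1}{p_jd_1}\right\rfloor$, I obtain $|E_{j,n}|=\sum_{d_2\mid Q}\mu(d_2)\,|S_{j,\,n/d_2-1}|$. As the $d_2=1$ term equals $|S_{j,n-1}|$, this gives the compact identity
$$
|B_{j,n}| - |E_{j,n}| = -|S_{j,n-1}| - 2\sum_{d_2\mid Q,\ d_2>1}\mu(d_2)\,|S_{j,\,n/d_2-1}|,
$$
which I shall estimate term by term.

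For the main term I insert into this identity the leading part $\frac{u}{p_j}\prod_{\ell=1}^{j-1}(1-1/p_\ell)$ of $|S_{j,u}|$, taken from the inclusion--exclusion representation \eqref{haromegeszresz} (which holds for every positive integer $u$, regardless of parity). The resulting main terms are collected using $\sum_{d_2\mid Q}\mu(d_2)/d_2=\prod_{k=1}^{t}(1-1/q_k)$ and $\sum_{d_2\mid Q}\mu(d_2)=0$; a short computation yields
$$
\frac{n-1}{p_j}\prod_{\ell=1}^{j-1}\Big(1-\frac1{p_\ell}\Big)\Big(1-2\prod_{k=1}^{t}\big(1-\tfrac1{q_k}\big)\Big)
$$
up to the bounded discrepancy $\frac{2}{p_j}\prod_{\ell<j}(1-1/p_\ell)\prod_k(1-1/q_k)\le 1/p_j\le 1/3$ coming from the difference between $n$ and $n-1$ in the product term (here I use that $\ell=1$ contributes the factor $\tfrac12$).

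The delicate point is the error estimate, where the claimed constant $2^{t+j-2}$ must be reached. The essential observation is that $j\ge2$ forces $2=p_1$ to be one of the sieved primes, so every counted integer is odd and every argument $n/d_2-1$ is even. Writing each divisor of $D$ uniquely as $d'$ or $2d'$ with $d'$ odd (so $d'$ runs over the $2^{j-2}$ divisors of $D/2$) and invoking the exact division by $2$ of \eqref{ketegeszresz}, I find that for even $u$ each paired bracket
$$
\left\lfloor\frac{u}{p_jd'}\right\rfloor-\left\lfloor\frac{u}{2p_jd'}\right\rfloor-\frac{u}{2p_jd'}
$$
lies in $(-\tfrac12,\tfrac12]$, so the error of $|S_{j,u}|$ is bounded by $2^{j-2}\cdot\tfrac12=2^{j-3}$ rather than the generic $2^{j-2}$. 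Applying this sharpened bound to each of the $1+2(2^{t}-1)=2^{t+1}-1$ weighted terms in the identity above bounds the total error by $(2^{t+1}-1)2^{j-3}=2^{t+j-2}-2^{j-3}$, and the surplus $2^{j-3}\ge\tfrac12$ comfortably absorbs the main-term discrepancy $\le 1/3$.

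The main obstacle is precisely this constant. A naïve approach, estimating $|S_{j,n-1}|$ and $|E_{j,n}|$ separately with the generic error $2^{j-2}$, overshoots to $2^{t+j-1}$ because of the coefficient $2$ in $|S_{j,n-1}|-2|E_{j,n}|$. Recovering the sharp $2^{t+j-2}$ rests entirely on the refined even-$u$ bound $2^{j-3}$, i.e.\ on the fact that the prime $2$ produces no error once the division by $2$ is carried out exactly through \eqref{ketegeszresz}.
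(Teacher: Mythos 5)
Your proof is correct, and it reaches the stated constant by a route that genuinely differs from the paper's. The paper starts from the same parity identity \eqref{Bjnegy}, but then sieves $|B_{j,n}|$ directly: it writes $B_{j,n}$ as a union over the primes $q_k$, expands each piece by a second sieve over $p_1,\dots,p_{j-1}$, interchanges the two summations, and bounds every floor bracket by $x-1<[x]\le x$, so that all $2^{j-1}\cdot 2^t$ brackets enter the error budget. You instead apply M\"obius inversion over the divisors $d_2$ of $q_1\cdots q_t$ to $E_{j,n}$ and collapse each term, via \eqref{egyegeszresz}, to the same counting function evaluated at scaled arguments, $|S_{j,\,n/d_2-1}|$; the resulting identity $|B_{j,n}|-|E_{j,n}|=-|S_{j,n-1}|-2\sum_{d_2\mid Q,\,d_2>1}\mu(d_2)|S_{j,\,n/d_2-1}|$ gives the $d_2=1$ term total weight $1$ instead of $3$, and your pairing of the divisors $d'$ and $2d'$ sharpens Lemma \ref{Sjelemszam} to the error bound $2^{j-3}$. (Two small remarks there: the pairing bound $\lfloor 2v\rfloor-\lfloor v\rfloor-v\in(-\tfrac12,\tfrac12]$ holds for \emph{every} argument, so the evenness of $n/d_2-1$ is not actually needed; and \eqref{ketegeszresz} is not literally applicable, since $p_jd'$ does not divide $n$ --- but your displayed inequality is elementary and self-contained, so nothing is lost.) These two savings are exactly what make the constant work, and your bookkeeping is in fact tighter than the paper's own: along the paper's route the intermediate claim $|C|\le 2^{t+j-3}$ requires precisely your pairing idea (the trivial floor bounds written there only yield $|C|<2^{t+j-2}$), and the final step $|C_1|\le 2^{t+j-3}+2^{j-2}$ drops the factor $2$ multiplying $C$ that comes from \eqref{Bjnegy}, so an honest count along the paper's organization gives $2^{t+j-2}+2^{j-3}$, slightly above the stated bound, whereas your count $2^{t+j-2}-2^{j-3}+\tfrac13$ lands strictly below it.
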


\begin{proof}
As $|B_{j,n}|+|E_{j,n}| = |S_{j,n-1}|$ we have
\begin{equation} \label{Bjnegy}
|B_{j,n}|-|E_{j,n}| = 2 |B_{j,n}| - |S_{j,n-1}| = |S_{j,n-1}| - 2(|S_{j,n-1}| - |B_{j,n}|).
\end{equation}
For $|S_{j,n-1}|$ we can use the estimations of Lemma \ref{Sjelemszam}, thus we have to deal only with the second summand. As $p_j<q$ for all prime factors $q$ of $n$, we have
$$
B_{j,n} = \bigcup_{\ell=1}^t \{m\;:\; m\in S_{j,n-1}, q_{\ell}|m\}.
$$
Using again the sieve formula we get
\begin{eqnarray*}
|B_{j,n}| &=& \sum_{\ell=1}^t (-1)^{\ell-1} \sum_{1\le j_1<\dots<j_{\ell}\le t} \left|\bigcap_{k=1}^{\ell}\{m\;:\; m\in S_{j,n-1}, q_{j_k}|m\}\right|\\
&=& \sum_{\ell=1}^t (-1)^{\ell-1} \sum_{1\le j_1<\dots<j_{\ell}\le t} |\{m\;:\; m\in S_{j,n-1}, q_{j_1}\cdots q_{j_{\ell}}|m\}|.
\end{eqnarray*}
Set $U_{j,\ell}(q_{j_1},\dots,q_{j_{\ell}}) = U_{j,\ell} = \{m\;:\; m\in S_{j,n-1}, q_{j_1}\cdots q_{j_{\ell}}|m\}.$ Then
$$
U_{j,\ell} = \{m\;:\; m\le n-1,p_j\cdot q_{j_1}\cdots q_{j_{\ell}}|m\}\setminus \bigcup_{i=1}^{j-1}\{m\;:\;  p_i p_j q_{j_1}\cdots q_{j_{\ell}}|m\}.
$$
We can compute the number of elements of $U_{j,\ell}$ by using the sieve formula.
\begin{eqnarray*}
|U_{j,\ell}| &=& \left[\frac{n-1}{p_j\cdot q_{j_1}\cdots q_{j_{\ell}}}\right]- \sum_{i=1}^{j-1} (-1)^{i-1} \sum_{1\le h_1<\dots<h_i< j} \left[\frac{n-1}{p_j\cdot q_{j_1}\cdots q_{j_{\ell}}p_{h_1}\cdots p_{h_i}}\right]\\
&=& \sum_{i=0}^{j-1} (-1)^{i-1} \sum_{1\le h_1<\dots<h_i< j} \left[\frac{n-1}{p_j\cdot q_{j_1}\cdots q_{j_{\ell}}p_{h_1}\cdots p_{h_i}}\right].
\end{eqnarray*}
Combining these formulae we get
$$
|B_{j,n}| = \sum_{\ell=1}^t (-1)^{\ell-1} \sum_{1\le j_1<\dots<j_{\ell}\le t} \sum_{i=0}^{j-1} (-1)^{i} \sum_{1\le h_1<\dots<h_i< j} \left[\frac{n-1}{p_j\cdot q_{j_1}\cdots q_{j_{\ell}}p_{h_1}\cdots p_{h_i}}\right].
$$
The last formula together with \eqref{haromegeszresz} implies
$$
|S_{j,n-1}|-|B_{j,n}| = \sum_{\ell=0}^t (-1)^{\ell} \sum_{1\le j_1<\dots<j_{\ell}\le t} \sum_{i=0}^{j-1} (-1)^{i} \sum_{1\le h_1<\dots<h_i< j} \left[\frac{n-1}{p_j\cdot q_{j_1}\cdots q_{j_{\ell}}p_{h_1}\cdots p_{h_i}}\right].
$$
Changing the order of the summation we get
$$
|S_{j,n-1}|-|B_{j,n}| = \sum_{i=0}^{j-1} (-1)^{i} \sum_{1\le h_1<\dots<h_i< j} \sum_{\ell=0}^t (-1)^{\ell} \sum_{1\le j_1<\dots<j_{\ell}\le t} \left[\frac{n-1}{p_j\cdot q_{j_1}\cdots q_{j_{\ell}}p_{h_1}\cdots p_{h_i}}\right].
$$
With Lemma \ref{egeszresz} we get
$$
-1<\left[\frac{n-1}{p_j\cdot q_{j_1}\cdots q_{j_{\ell}}p_{h_1}\cdots p_{h_i}}\right]- \frac{n-1}{p_j\cdot q_{j_1}\cdots q_{j_{\ell}}p_{h_1}\cdots p_{h_i}}\le 0.
$$
Thus
$$
\left|\sum_{\ell=0}^t (-1)^{\ell} \sum_{1\le j_1<\dots<j_{\ell}\le t} \left[\frac{n-1}{p_j\cdot q_{j_1}\cdots q_{j_{\ell}}p_{h_1}\cdots p_{h_i}}\right]- \frac{n-1}{p_j\cdot p_{h_1}\cdots p_{h_i}}\prod_{k=1}^t\left(1-\frac1{q_k}\right)\right| \le 2^{t-1}
$$
and
$$
|S_{j,n-1}|-|B_{j,n}| = \frac{n-1}{p_j}\prod_{k=1}^t\left(1-\frac1{q_k}\right) \prod_{\ell=1}^{j-1}\left(1-\frac1{p_{\ell}}\right) + C,
$$
where the constant $C$ is at most $2^{t+j-3}$. This estimate together with Lemma \ref{Sjelemszam} and \eqref{Bjnegy} gives
$$
|B_{j,n}| - |E_{j,n}|= \frac{n-1}{p_j} \prod_{\ell=1}^{j-1}\left(1-\frac1{p_{\ell}} \right)- 2 \frac{n-1}{p_j}\prod_{k=1}^t\left(1-\frac1{q_k}\right) \prod_{\ell=1}^{j-1}\left(1-\frac1{p_{\ell}}\right) + C_1,
$$
where $|C_1| \le 2^{t+j-3} + 2^{j-2}< 2^{t+j-2}$.
\end{proof}

The next lemma plays a key role in the proof of Theorem \ref{Lajos}. In contrast to the classes of odd numbers, it is possible to give the exact values of the difference of the number of friends and enemies in the class of even numbers.

\begin{lemma} \label{fontos}
Let $n= q_1^{\alpha_1}\cdots q_t^{\alpha_t}$ with $q_1<\dots< q_t$ odd primes and $\alpha_1,\dots, \alpha_t$ positive integers. Then
\begin{eqnarray*}
|E_{1,n}| &=& \frac{\varphi(n)}{2} = \frac{n}2 \left(1-\frac1{q_1}\right)\cdots \left(1-\frac1{q_t}\right),\\
|B_{1,n}| &=& \frac{n-1}2 - |E_{1,n}|.
\end{eqnarray*}
\end{lemma}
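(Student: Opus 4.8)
The plan is to derive the second identity from the first essentially for free, and to obtain the first one from a parity symmetry of the totatives of $n$.

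First I would record that $n$ is odd, being a product of odd primes, so $n-1$ is even and $S_{1,n-1}=\{2,4,\dots,n-1\}$ consists of exactly $(n-1)/2$ elements. Since $B_{1,n}$ and $E_{1,n}$ partition $S_{1,n-1}$ according to whether $\gcd(m,n)>1$ or $\gcd(m,n)=1$, we have $|B_{1,n}|+|E_{1,n}|=(n-1)/2$, which is precisely the asserted formula for $|B_{1,n}|$. Hence the whole lemma reduces to computing $|E_{1,n}|$.

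Next I would reinterpret $E_{1,n}$ as the set of even integers in $[1,n-1]$ that are coprime to $n$. Because $n>1$ is not coprime to itself, the integers in $[1,n-1]$ coprime to $n$ are exactly the $\varphi(n)$ totatives of $n$; the odd number $1$ contributes nothing to $E_{1,n}$ and the even numbers start at $2$, so it suffices to show that exactly half of these totatives are even.

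The key step is the fixed-point-free involution $m\mapsto n-m$ on $\{1,\dots,n-1\}$. It preserves coprimality to $n$ since $\gcd(m,n)=\gcd(n-m,n)$, and since $n$ is odd it reverses the parity of $m$ and has no fixed point (the equation $2m=n$ has no integer solution). Consequently the $\varphi(n)$ totatives of $n$ fall into pairs, each consisting of one even and one odd number, so exactly $\varphi(n)/2$ of them are even. This gives $|E_{1,n}|=\varphi(n)/2$, and the explicit product $\varphi(n)=n\prod_{k=1}^{t}(1-1/q_k)$ is the standard evaluation of Euler's function for $n=q_1^{\alpha_1}\cdots q_t^{\alpha_t}$. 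There is no serious obstacle here: the only point requiring care is to notice that the special role of the prime $2$ --- the oddness of $n$ making $m\mapsto n-m$ parity-reversing and fixed-point-free --- is exactly what forces an even split and yields an \emph{exact} value rather than the error terms appearing in Lemma \ref{Bjn}.
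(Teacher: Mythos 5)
Your proof is correct and follows essentially the same route as the paper: both rest on the map $m\mapsto n-m$, which preserves coprimality with $n$ and reverses parity because $n$ is odd, forcing the $\varphi(n)$ totatives to split evenly into even and odd ones. The only cosmetic difference is that you phrase this as a fixed-point-free involution pairing totatives of opposite parity, while the paper phrases it as a bijection between the totatives in $[1,\frac{n-1}{2}]$ and those in $[\frac{n+1}{2},n-1]$; the counting argument is the same.
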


\begin{proof}
The statement could be proved by repeating the proof of Lemma \ref{Bjn} and using Lemma \ref{egeszresz}. However, there is a much more direct and simple way, which we present.

Let $A_1=\{h\;:\; 1\le h\le \frac{n-1}2, \gcd(h,n)=1\}$ and $A_2=\{h\;:\; \frac{n+1}2 \le h< n, \gcd(h,n)=1\}$. Then $A_1$ and $A_2$ are disjoint and their union is $A=\{h\;:\; 1\le h < n, \gcd(h,n)=1\}$. Plainly $|A| = \varphi(n)$. The mapping $\psi\;:\; h\mapsto n-h$ is bijective between $A_1$ and $A_2$. Moreover, $\psi(h)$ is odd if and only if $h$ is even. Thus the number of even positive integers, which are coprime to $n$ is $\varphi(n)/2$. As $E_{1,n}$ is exactly the set of even integers, less than and coprime to $n$, the proof is complete.
\end{proof}

\section{Proof of Theorem \ref{Lajos}}

Despite of the lengthy preparation, the proof of Theorem \ref{Lajos} is complicated. First we confirm the cases where $n$ is odd, $3\mid n$, and prove the second assertion. The hard part is to prove that \eqref{egy} is true for $n<n_0$. This is done by a combination of comparison of the estimations of Lemmata \ref{Sjelemszam} and \ref{Bjn}, some computer search and finally application of a tool from prime number theory.

\subsection{The cases $n$ is odd with $3\mid n$, and proving the second assertion}
Let $n> 2$ be an integer and assume that \eqref{egy} holds for all $m< n$. Suppose further that $k=2$, i.e. the smallest prime factor of $n$ is $q_1=3$. Then by Lemma \ref{fontos} we have $|B_{1,n}|-|E_{1,n}|=\frac{n-1}2 - \varphi(n)$. A simple computation shows that $|S_2| = \frac{n-3}6$, which is a bit stronger then the statement of Lemma \ref{Sjelemszam}.

By Lemma \ref{hovategyuk}, $n$ is adjoined to $S_1$ precisely when $|S_2|< |B_{1,n}|-|E_{1,n}|$. This inequality implies $\frac{n-3}6 < \frac{n-1}2 - \varphi(n)$, which is equivalent to $\varphi(n)<\frac{n}3$. Using the explicit form of $\varphi(n)$ and dividing by $n$ we get $\left(1-\frac13\right)\cdot \left(1-\frac1{q_2}\right)\cdots \left(1-\frac1{q_t}\right)<\frac13$. Thus Algorithm~1 adjoins $n$ to $S_1$ if and only if
\begin{equation} \label{ketto}
\left(1-\frac1{q_2}\right)\cdots \left(1-\frac1{q_t}\right)<\frac12.
\end{equation}
Inequality \eqref{ketto} is independent from the exponents $\alpha_1,\dots,\alpha_t$, thus $n$ is minimal with this property if all exponents are one. For fixed $t$ the number $n$ is minimal if $q_1,\dots,q_t$ are consecutive odd primes starting with $q_1=3$. A simple computation shows that the smallest $n$ which is divisible by $3$ and satisfies \eqref{ketto} is $n_0$.

\begin{rem} An analogous computation shows that $n_1=5\cdot p_4\cdots p_{14} = 2~180~460~221~945~005$ is a candidate to be the smallest odd integer, which is not divisible by $3$ and is adjoined to $S_1$. However, as $n_1$ is much larger than $n_0$ and many odd integers between $n_0$ and $n_1$, e.g. $3n_0, 5n_0, 9n_0,\dots$ are adjoined to $S_1$ we are not sure that for example $n_1'=5\cdot p_4\cdots p_{13}$ will belong to $S_1$ or to $S_3$.
\end{rem}

\subsection{Numbers with middle sized factors}

Let $n= q_1^{\alpha_1}\cdots q_t^{\alpha_t}<n_0$ be such that $p_i=q_1<q_2\dots<q_t$. For $i\le 2$ Theorem \ref{Lajos} is already proved. Assume that $i\ge 3$. By Lemma~\ref{hovategyuk} Algorithm~1 adjoins $n$ to $S_{i,n-1}$ if and only if
\begin{equation} \label{feltetel}
|B_{j,n}| - |E_{j,n}| < |S_{i,n-1}|
\end{equation}
holds for all $1\le j<i$. By Lemmata \ref{Sjelemszam}, \ref{Bjn} we have
$$
|S_{i,n-1}| \ge \frac{n-1}{p_i} \prod_{\ell=1}^{i-1} \left(1-\frac1{p_{\ell}}\right) - 2^{i-2}
$$
and
$$
|B_{j,n}| - |E_{j,n}| \le \frac{n-1}{p_j} \prod_{\ell=1}^{j-1}\left(1-\frac1{p_{\ell}} \right) \left(1- 2\prod_{k=1}^t\left(1-\frac1{q_k}\right)\right)+  2^{t+j-2}.
$$
Thus if
$$
\frac{n-1}{p_j} \prod_{\ell=1}^{j-1}\left(1-\frac1{p_{\ell}} \right) \left(1- 2\prod_{k=1}^t\left(1-\frac1{q_k}\right)\right)+  2^{t+j-2} \le \frac{n-1}{p_i} \prod_{\ell=1}^{i-1} \left(1-\frac1{p_{\ell}}\right) - 2^{i-2},
$$
then \eqref{feltetel} holds, too. The last inequality is equivalent to
$$
\frac{n-1}{p_i} \prod_{\ell=1}^{i-1} \left(1-\frac1{p_{\ell}}\right) + \frac{n-1}{p_j} \prod_{\ell=1}^{j-1}\left(1-\frac1{p_{\ell}} \right) \left( 2\prod_{k=1}^t\left(1-\frac1{q_k}\right)-1 \right) \ge 2^{t+j-2} + 2^{i-2}.
$$
For fixed $t$ and $i$ the product $\prod_{k=1}^t\left(1-\frac1{q_k}\right)$ assumes its smallest value if the $q_k$-s are the $t$ consecutive primes starting with $p_i$. Thus if $n_1=n_1(i,j,t)$ denotes the smallest $n$, which satisfies
$$
n \ge 1+ (2^{t+j-2} + 2^{i-2})/T,
$$
where
$$
T = \frac{1}{p_i} \prod_{\ell=1}^{i-1} \left(1-\frac1{p_{\ell}}\right) + \frac{1}{p_j} \prod_{\ell=1}^{j-1}\left(1-\frac1{p_{\ell}} \right) \left( 2\prod_{k=1}^t\left(1-\frac1{q_k}\right)-1 \right)
$$
then \eqref{feltetel} holds for all $n\ge n_1$ having exactly $t$ prime factors, from which the smallest is $p_i$.

We computed $n_1(i,j,t)$ for all triplets $(i,j,t)$ with $3\le i \le 20, 3\le j\le i-1, 1\le t\le t_i$, where $t_i$ is the largest $t$ such that $\prod_{k=0}^{t-1} p_{i+k} \le n_0$. Remark that $n_1(i,j,t)>n_0$ for $i\ge 20$. By our computation $n_1(i,j,t)$ is a monotone increasing function of $j$ for fixed $(i,t)$, thus we displayed in Table \ref{T2} only the values $n_1(i,i-1,t)$.

\begin{table}
\begin{tabular}{|c|c|c|c|c|}
\hline
$i=3, p=5$&  $i=4, p=7$& $i=5,p=11$&  $i=6,p=13$& $i=7,p=17$  \\
\hline
$t, n1$ & $t, n1$ & $t, n1$ & $t, n1$ & $t, n1$ \\
\hline
$1,24$ &$1,93$     & $1,308$ &$1,953$& $1,2521$ \\ \hline
$2, 46$ & $2, 159$ & $2,514$ & $2, 1533$ & $2, 4033$  \\ \hline
$3, {\it 92}$ &$3, {\it 297}$  & $3, {\it 933}$ &$3, {\it 2720}$  & $3, {\it 7091}$   \\ \hline
$4, {\it 196}$ &$4, {\it 583}$ & $4, {\it 1813}$ &$4, {\it 5157}$ & $4, {\it 13318}$   \\ \hline
$5, {\it 422}$ &$5, {\it 1194}$ & $5, {\it 3654}$ &$5, {\it 10151}$ & $5, {\it 26186}$  \\ \hline
$6, {\it 929}$ &$6, {\it 2480}$ & $6, {\it 7471}$ &$6, {\it 20484}$ &   \\ \hline
$7$, {\it 2044} && & &    \\ \hline \hline
$i=8,p=19$ &  $i=9, p=23$& $i=10,p=29$&  $i=11,p=31$& $i=12,p=37$\\
\hline
$t, n1$ & $t, n1$ & $t, n1$ & $t, n1$ & $t, n1$ \\
\hline
$1,6531$ &$1,15889$     & $1,40751$ &$1,98726$& $1,228806$ \\ \hline
$2, 10285$ & $2, 24799$ & $2,63466$ & $2, 152425$ & $2, 352739$ \\ \hline
$3, 17815$ &$3, 42901$  & $3, 109166$ &$3, 260747$  & $3, 603486$   \\ \hline
$4, {\it 33246}$ &$4, {\it 79673}$ & $4, {\it 202185}$ &$4, {\it 481157}$ & $4, {\it 1112639}$   \\ \hline
$5, {\it 64728}$ &$5, {\it 154809}$ & $5, {\it 392470}$ &$5, {\it 929762}$ &   \\ \hline
 \hline
$i=13,p=41$ &  $i=14, p=43$& $i=15,p=47$&  $i=16,p=53$& $i=17,p=59$\\
\hline
$t, n1$ & $t, n1$ & $t, n1$ & $t, n1$ & $t, n1$ \\
\hline
$1,542016$ &$1,1198905$     & $1,2623122$ &$1,5937759$& $1,13554766$ \\ \hline
$2, 833706$ & $2, 1838933$ & $2,4014787$ & $2, 9071489$ & $2, 20693167$ \\ \hline
$3, 1421830$ &$3, 3126106$  & $3, 6813569$ &$3, 15389987$  & $3, 35045873$   \\ \hline
$4, {\it 2612025}$ &$4, {\it 5727822}$ & $4, 12481252$ &$4, 28153619$ & $4, 64044517$   \\ \hline
 \hline

$i=18,p=61$ &  $i=19, p=67$& &  & \\
\hline
$t, n1$ & $t, n1$ & &  & \\
\hline
$1,29627101$ &$1,64068095$     & &&  \\ \hline
$2, 45131528$ & $2, 97553073$ & &  &  \\ \hline
$3, 76322166$ &  &  &  &   \\ \hline

\end{tabular}\\[1.5 ex]
\caption{\label{T2}}
\end{table}

If $n_1(i,i-1,t)> \prod_{k=0}^{t-1}p_{i+k}$, which appeares often, then \eqref{feltetel} cannot hold for the pair $(i,t)$. These values are displayed in italics in Table \ref{T2}.

\subsection{Verification of the statement for numbers with one and two prime factors}
The following lemma verifies Theorem \ref{Lajos} if $n$ is a prime power.
\begin{lemma} \label{primhatvany}
Let $p=p_i\ge 3$ be a prime. If $p\le 67$ and $p^{\alpha}<n_0, \alpha>0$ then $p^{\alpha}\in S_{i,n}$. Generally, if $\alpha\le 4$ then $n=p^{\alpha} \in S_{i,n}$.
\end{lemma}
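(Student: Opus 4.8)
The plan is to reduce the statement to a single inequality governed by Lemma~\ref{hovategyuk}, and then to verify that inequality by combining the explicit estimates of Lemmata~\ref{Sjelemszam} and~\ref{Bjn} with a finite computer check. Let $n=p^{\alpha}$ with $p=p_i$ an odd prime. By Corollary~\ref{newcor}(3) the single prime factor $p_i$ forces $n\in S_{j,n}$ with $j\le i$, so it suffices to rule out the cases $j<i$. By Lemma~\ref{hovategyuk}, Algorithm~1 adjoins $n$ to $S_{i,n-1}$ precisely when
$$
|B_{j,n}|-|E_{j,n}| < |S_{i,n-1}|
$$
holds for every $1\le j<i$. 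Thus the whole lemma comes down to establishing this inequality for all the relevant $(i,j,\alpha)$.

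First I would dispose of the class of even numbers, $j=1$, which is handled exactly rather than approximately. Since $n=p^{\alpha}$ has the single odd prime factor $q_1=p$, Lemma~\ref{fontos} gives $|B_{1,n}|-|E_{1,n}| = \tfrac{n-1}{2} - \varphi(n) = \tfrac{n-1}{2} - n(1-\tfrac1p)$, which simplifies to a small \emph{negative} quantity (of order $-n/2\cdot(1-2/p)$, hence negative for $p\ge3$); meanwhile $|S_{i,n-1}|$ is positive for $n$ large enough to contain an element of $S_i$. So the $j=1$ comparison is immediate. For $2\le j<i$ I would invoke the middle-sized-factors analysis of the preceding subsection: with $t=1$ and $q_1=p=p_i$ the quantity $T$ and the bound $n_1(i,j,1)$ are exactly the numbers tabulated in the $t=1$ rows of Table~\ref{T2}. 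Reading off those rows, one sees $n_1(i,i-1,1)$ stays below the prime powers in question for every $p_i\le 67$, so inequality~\eqref{feltetel} holds for all $n=p^{\alpha}\ge n_1(i,i-1,1)$ automatically, and only the finitely many prime powers below the tabulated threshold require direct inspection.

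For the general-$\alpha$ claim (that $\alpha\le 4$ always works, for any odd prime), I would note that as $\alpha$ grows $n=p^{\alpha}$ grows geometrically while the thresholds $n_1(i,j,1)$ depend only on $i$ and $j$; so beyond a small exponent the asymptotic estimate $|S_{i,n-1}|\approx \tfrac{n}{p_i}\prod_{\ell<i}(1-\tfrac1{p_\ell})$ dominates the correspondingly scaled $|B_{j,n}|-|E_{j,n}|$, the error terms $2^{i-2}$ and $2^{j-1}$ being fixed. Concretely I would check that $p^4$ already exceeds the worst relevant threshold for each $i$ under consideration, and that for the bounded range of primes ($p\le 67$) and exponents one needs, a finite computation closes every remaining case. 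The main obstacle is precisely the bookkeeping of the error terms against $|S_{i,n-1}|$ for the \emph{smallest} admissible $n=p^{\alpha}$: when $n$ is barely large enough to contain a few elements of $S_i$, the additive constants $2^{i-2}$ and $2^{j-1}$ are not negligible compared with the main term, so the clean inequality derived from Lemmata~\ref{Sjelemszam} and~\ref{Bjn} may fail and one is forced into the explicit finite verification rather than a uniform estimate. Managing that boundary region is where the real work lies.
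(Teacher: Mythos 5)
Your reduction via Lemma~\ref{hovategyuk} to inequality \eqref{feltetel}, your treatment of $j=1$ via Lemma~\ref{fontos}, and your use of the $t=1$ thresholds $n_1(i,j,1)$ of Table~\ref{T2} for sufficiently large prime powers all match the paper's setup. The gap is everything below those thresholds, and it is not a boundary nuisance but the substance of the lemma. Your claim that $p^4$ exceeds the worst relevant threshold for each $i$ under consideration is numerically false: for $p=67$ ($i=19$) the table gives $n_1(19,18,1)=64\,068\,095$ while $67^4=20\,151\,121$, so \emph{none} of the powers of $67$ (nor of $59$ or $61$) below $n_0$ is covered by the table; likewise $p^2$ lies below the threshold already for $p=7$. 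Worse, the second assertion of the lemma (``if $\alpha\le 4$ then $p^{\alpha}\in S_{i,n}$'') is claimed for \emph{all} odd primes $p$, an infinite family, so your fallback of ``explicit finite verification'' cannot close it even in principle: the thresholds $n_1(i,j,1)$ carry the additive errors $2^{i-2}$ and $2^{j-1}$ from Lemmata~\ref{Sjelemszam} and~\ref{Bjn}, which grow exponentially in $i$, whereas $p_i^4$ grows only polynomially in $i$. Hence for all large $i$ every power $p_i^{\alpha}$ with $\alpha\le 4$ falls below the threshold, and the estimate-based method says nothing at all about it.

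What the paper does in exactly this region --- and what is missing from your proposal --- is a separate combinatorial argument that is uniform in $p$. It establishes the identity
$$
B_{j,p^{\alpha}}=\bigcup_{k=1}^{\alpha-1}p^k E_{j,p^{\alpha-k}},
$$
which expresses the friends of $p^{\alpha}$ in $S_j$ through enemies at lower levels, and then, for $\alpha=2,3,4$ separately, constructs explicit injections from $B_{j,p^{\alpha}}$ into $E_{j,p^{\alpha}}$ by multiplying the sets $E_{j,\cdot}$ by suitably chosen primes: Chebyshev's theorem supplies primes in intervals such as $(p/p_j,p)$, $(p^2/2,p^2)$, $(p^3/16,p^3)$, and the sizes of these intervals force the resulting sets to be pairwise disjoint and to stay below $p^{\alpha}$. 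This yields $|B_{j,n}|-|E_{j,n}|<0\le |S_{i,n-1}|$ for every $j<i$ and every odd prime $p$, which simultaneously settles the general $\alpha\le 4$ claim and all the sub-threshold prime powers with $p\le 67$; only then is Table~\ref{T2} invoked, to cover the exponents $\alpha\ge 5$ that can occur for small $p$. Without this (or some substitute argument not relying on the sieve estimates), your proposal proves only the easy, above-threshold portion of the lemma.
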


\begin{proof}
If $\alpha=1$ then as for all $j<i$ $S_{i,n-1}=B_{j,n}=\emptyset$ and $|N_{j,n}|>0$ the statement is true. The statement is valid for $p=3$ too because the smallest integer divisible by three, which lands in $S_1$ is $n_0$.

Let $\alpha>1$ and $j<i$. Then we have
\begin{equation}\label{baratok}
B_{j,p^{\alpha}}= \bigcup_{k=1}^{\alpha-1}p^k E_{j,p^{\alpha-k}}.
\end{equation}
If $m\in B_{j,p^{\alpha}}$, then either $m$ is divisible only by the first power of $p$, thus $m/p\in E_{j,p^{\alpha-1}}$ or $m$ is divisible by a higher power of $p$, in which case $m/p\in B_{j,p^{\alpha-1}}$, i.e.
$$
B_{j,p^{\alpha}} = pE_{j,p^{\alpha-1}} \cup p B_{j,p^{\alpha-1}}.
$$
Using this identity we get the proof of \eqref{baratok} by induction.

{\it Case 1, $\alpha=2$.} Then $|B_{j,p^{2}}| = |E_{j,p}|$. By Tchebishev's theorem there exists at least one prime $q$ with $p/p_j< q < p$. Hence $p_jq\in E_{j,p^{2}} \setminus E_{j,p}$, which implies $|E_{j,p^{2}}| > |B_{j,p^{2}}| = |E_{j,p}|$. Thus $p^2 \in S_{i,n}$, in particular $7^2\in S_{4,n}, 11^2\in S_{5,n}$. These facts together with the entries $(i,t)=(3,1), (4,1)$ of Table \ref{T2} show that the assertion is true for $p=5,7$.

{\it Case 2, $\alpha=3$.} Then there exists a prime $q$ with $p^2/p_j< q < p^2$, i.e. $p_jq\notin E_{j,p^{2}}$. If $m\in E_{j,p}$ then $qm \le qp <p^3$, thus
$$
|E_{j,p^{3}}| \ge |B_{j,p^{3}}| = |E_{j,p^{2}}| + |E_{j,p}|,
$$
and this case is proved. By Table \ref{T2} the assertion is true for $p\le 19$.

{\it Case 3, $\alpha=4$.} Identity \eqref{baratok} implies
$$
|B_{j,p^{4}}| = |E_{j,p^{3}}| + |E_{j,p^{2}}| + |E_{j,p}|.
$$
We have plainly $E_{j,p^{2}} = E_{j,p} \cup E_{j,(p,p^2/p_j]} \cup E_{j,(p^2/p_j,p^2]}$, where the second and third sets include all elements of $E_{j,p^{2}}$, which belong to the interval $(p,p^2/p_j]$ and $(p^2/p_j, p^2]$ respectively. As $p_j\ge 5$ there exist at least two different primes $q_1,q_2$ with $p^3/p_j<q_1,q_2<p^3$, i.e.
$$q_kE_{j,p} \cap E_{j,p^{3}} =q_1E_{j,p} \cap q_2E_{j,p} = \emptyset, k=1,2.$$
Further, there exist primes $q_3,q_4$ with $p^2 < q_3 < 2p^2$ and $p^2/2 < q_4 < p^2$. The sets $q_kE_{j,p}, k=1,2, q_3E_{j,(p,p^2/p_j]}$ and $q_4 E_{j,(p^2/p_j,p^2]}$ are by the construction pairwise disjoint. If $m \in q_3E_{j,(p,p^2/p_j]} \cap q_4 E_{j,(p^2/p_j,p^2]}$ then $m$ is divisible by the pairwise different primes $q_3,q_4,p_j$, thus $m\ge p_jq_3q_4> p_j p^4/2>p^4$, which is a contradiction.
\end{proof}

The next lemma verifies Theorem \ref{Lajos} for integers, which have two different prime divisors and the smaller is at most $53$.
\begin{lemma} \label{ketprim}
Let $p=p_i\ge 3$ and $q>p$ be primes. If $p\le 53$ and $p^{\alpha}q^{\beta}<n_0, \alpha,\beta>0$ then $p^{\alpha}q^{\beta}\in S_{i,n}$. Generally, if $q < p^3$ then $n=pq \in S_{i,n}$.
\end{lemma}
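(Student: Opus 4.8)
The plan is to follow the strategy of Lemma~\ref{primhatvany}, now carried out with two distinct prime factors. By Corollary~\ref{newcor}(3) the number $n=p^{\alpha}q^{\beta}$ lands in some $S_{j,n}$ with $j\le i$; concretely, for $j=i$ we have $B_{i,n}=S_{i,n-1}$ and $E_{i,n}=\emptyset$, so $|B_{i,n}|-|E_{i,n}|=|S_{i,n-1}|$, while for $j>i$ the proof of Corollary~\ref{newcor}(3) already gives $|B_{j,n}|-|E_{j,n}|\le|S_{i,n-1}|$. Hence, by Lemma~\ref{hovategyuk}, it suffices to prove the \emph{strict} inequality
\begin{equation}\label{plan-cel}
|B_{j,n}|-|E_{j,n}|<|S_{i,n-1}|\qquad(1\le j<i),
\end{equation}
so that $i$ is the smallest maximizing index. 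The case $p=3$ ($i=2$) is contained in the discussion of $3\mid n$ above, so I may assume $p=p_i\ge 5$.

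First I would record an exact count of the friends. Splitting $B_{j,pq}$ into the multiples of $p$ and the multiples of $q$ coprime to $p$, and writing $m=ph$ resp.\ $m=qh$, I get the clean identity
\begin{equation}\label{plan-ketto}
|B_{j,pq}|=|S_{j,q-1}|+|S_{j,p-1}|,
\end{equation}
with an analogue of \eqref{baratok} for general exponents, obtained by peeling off one factor $p$ or $q$ at a time. The value of \eqref{plan-ketto} is that, by Lemma~\ref{Sjelemszam}, the density of the friends of $n$ inside $S_{j,n-1}$ equals $\tfrac1p+\tfrac1q-\tfrac1{pq}\le\tfrac15+\tfrac17<\tfrac12$ for all $q>p\ge 5$; equivalently the factor $1-2\bigl(1-\tfrac1p\bigr)\bigl(1-\tfrac1q\bigr)$ occurring in Lemma~\ref{Bjn} is strictly negative. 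Thus the leading term of $|B_{j,n}|-|E_{j,n}|$ is negative whereas $|S_{i,n-1}|>0$, so \eqref{plan-cel} holds for all sufficiently large $n$.

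It remains to dominate the bounded error terms, and this divides into two regimes. For $p=p_i\le 53$ (that is, $i\le 16$) I would make ``sufficiently large'' explicit through the thresholds $n_1(i,j,2)$ of Table~\ref{T2}: once $n\ge n_1(i,i-1,2)$, Lemmata~\ref{Sjelemszam} and~\ref{Bjn} already force \eqref{plan-cel}, and since $p^{\alpha}q^{\beta}<n_0$ leaves only finitely many pairs $(p^{\alpha},q^{\beta})$ below the threshold, these are cleared by a finite computation. For the unconditional assertion about $n=pq$ with $q<p^3$ I would instead argue combinatorially as in Lemma~\ref{primhatvany}: using Chebyshev's theorem to produce primes in the intervals prescribed by $p$ and $q$, one injects the two families in \eqref{plan-ketto} into the enemy set $E_{j,pq}$, i.e.\ into the elements of $S_{j,pq-1}$ that are coprime to both $p$ and $q$. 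The hypothesis $q<p^3$ is precisely what guarantees that the needed primes exist in each range, and the construction yields $|B_{j,pq}|\le|E_{j,pq}|$, whence the left-hand side of \eqref{plan-cel} is nonpositive.

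The main obstacle is this explicit enemy-construction. In contrast to the prime-power case of Lemma~\ref{primhatvany}, the multiples of $p$ and of $q$ must be relocated simultaneously to integers coprime to $pq$ while remaining in $S_{j,\cdot}$ and below $n$, and one must verify that the resulting images are pairwise distinct; this is exactly where $q<p^3$ enters, and where I expect the delicate bookkeeping to lie. A secondary difficulty is handling the exponents uniformly: since the leading behaviour in Lemma~\ref{Bjn} depends only on the primes $p,q$ and not on $\alpha,\beta$, larger exponents merely enlarge $n$ and so help, and I expect either a short monotonicity reduction to $\alpha=\beta=1$ or a direct re-run of the estimates, completed by the finite check that reconciles the few small $n$ the asymptotics miss.
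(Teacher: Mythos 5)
Your setup matches the paper's own proof almost step for step: the reduction via Lemma \ref{hovategyuk} to the strict inequalities $|B_{j,n}|-|E_{j,n}|<|S_{i,n-1}|$ for $1\le j<i$; the exact friend count $|B_{j,pq}|=|S_{j,q-1}|+|S_{j,p-1}|$, which is precisely the paper's identity \eqref{ketprimazonossag} (there $E_{j,p}=S_{j,p-1}$ and $E_{j,q}\cup B_{j,q}=S_{j,q-1}$); the negativity of the factor $1-2\left(1-\frac1p\right)\left(1-\frac1q\right)$ from Lemma \ref{Bjn}; and the thresholds $n_1(i,j,2)$ of Table \ref{T2} for large $n$. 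Your treatment of the first assertion deviates only mildly: where you would clear the finitely many below-threshold numbers $p^{\alpha}q^{\beta}<n_1(i,i-1,2)$ by a computer check (in the spirit of the paper's handling of numbers with three prime factors), the paper clears them by the same combinatorial construction it uses for the second assertion. That substitution is defensible in principle.

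The genuine gap is the second assertion. The claim that $pq\in S_{i,n}$ whenever $q<p^3$, with $p$ unbounded, cannot be settled by any finite computation, and your proposal does not prove it: you name the intended injection of the friend families into $E_{j,pq}$ via primes supplied by Chebyshev's theorem and then explicitly defer the construction as ``the main obstacle'' with ``delicate bookkeeping''. That construction is the substance of the paper's proof, and it is not routine, because a single uniform choice of prime multipliers fails: each product must stay below $pq$, remain inside $S_{j,\cdot}$, avoid $E_{j,q}$, and avoid the other image sets, and the available room changes drastically as $q$ ranges over $(p,p^3)$. The paper needs a case split according to $q<p^2$, $p^2<q\le 4p^2$, $4p^2<q\le p^3/2$ and $p^3/2<q<p^3$; in the harder cases it further splits $E_{j,\lfloor q/p\rfloor}$ into interval pieces such as $E_{j,p}\cup E_{j,(p,\lfloor q/p\rfloor]}$ (resp.\ $E_{j,2p}\cup E_{j,(2p,\lfloor q/p\rfloor]}$), chooses four primes $r_1,r_2,r_3,r_4$ in ranges like $(q/4,q)$, $(q/8,q/2)$, $(p^2/2,p^2)$, $(p^3/16,p^3)$ with $r_4\ne q,r_1,r_2$, and verifies pairwise disjointness by divisibility arguments (e.g.\ $m\in r_1E_{j,p}$ forces $m\ge r_1p_j>q$, so it misses $E_{j,q}$). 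None of this appears in your proposal; what you have is a correct plan whose hardest step — exactly the step where the hypothesis $q<p^3$ does its work — is still open.
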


\begin{proof}
The idea of the proof is similar to the proof of Lemma \ref{primhatvany}. The assertion is true for $i=3$ because the smallest integer divisible by three and not lying in $S_{2,n}$ is $n_0$. We start the proof of the general case with the identity
\begin{equation} \label{ketprimazonossag}
B_{j,pq} = qE_{j,p} \cup p(E_{j,q} \cup B_{j,q}) = qE_{j,p} \cup pE_{j,q} \cup p^2 E_{j,\lfloor q/p \rfloor} \cup p^2 B_{j,\lfloor q/p^2 \rfloor} .
\end{equation}
If $p^2>q$ then $B_{j,\lfloor q/p^2 \rfloor} = \emptyset$ and we get
$$
B_{j,pq} = qE_{j,p} \cup pE_{j,q} \cup p^2 E_{j,\lfloor q/p \rfloor}.
$$
There exist primes $r_1,r_2$ with $q/2<r_1<q$ and $p^2/4<r_2<p^2, r_2\not=q,r_1$. Then
$$
E_{j,pq} \supset r_1E_{j,p} \cup E_{j,q} \cup r_2 E_{j,\lfloor q/p \rfloor}
$$
and the sets on the right hand side are disjoint. Thus if $q<p^2$ then $pq\in S_{i,pq}$. Combining this with Table \ref{T2}, implies the assertion for $p\le 17$.

If $p^2<q<p^3$ then \eqref{ketprimazonossag} implies
$$
B_{j,pq} = qE_{j,p} \cup pE_{j,q} \cup p^2 E_{j,\lfloor q/p \rfloor} \cup p^3 E_{j,\lfloor q/p^2 \rfloor}.
$$
If $q\le 4p^2$ then we have $E_{j,\lfloor q/p^2 \rfloor}= \emptyset$ and the argument of the last case works here too. The other extreme case $q>p^3/2$ also needs a separate analysis. Hence we assume $4 p^2< q\le p^3/2$.

We have $E_{j,\lfloor q/p \rfloor} = E_{j,p} \cup E_{j,(p,\lfloor q/p \rfloor]}$, where $E_{j,(p,\lfloor q/p \rfloor]}$ denotes the set of those elements of $E_{j,\lfloor q/p \rfloor}$, which belong to the interval $(p,\lfloor q/p \rfloor]$.
There exist primes $r_k, k=1,2,3,4$ such that $r_1,r_2\in (q/4,q), r_1\not= r_2, \;r_3\in (p^2/2,p^2), r_4\in (p^3/16,p^3), r_4\not= q,r_1,r_2$. Plainly $r_3<r_1,r_2,r_4$. Then
$$
E_{j,pq} \supset E_{j,q} \cup r_{1}E_{j,p} \cup r_{2}E_{j,p} \cup r_{3}E_{j,(p,\lfloor q/p \rfloor]} \cup  r_4 E_{j,\lfloor q/p^2 \rfloor}
$$
and the sets on the right hand side are disjoint. The first claim holds because of the choice of the size of the $r_k$'s. If $m\in r_i E_{j,p}, i=1,2$ then $r_ip_j|m$, thus $m\ge r_ip_j> qp_j/4>q$, hence $m \notin E_{j,q}$. A similar argument shows that $E_{j,q} \cap r_4 E_{j,\lfloor q/p^2 \rfloor} = \emptyset$. If $m \in E_{j,(p,\lfloor q/p \rfloor]}$ then $mr_3> p p^2/2 \ge q$, i.e. $E_{j,q} \cap r_3 E_{j,(p,\lfloor q/p \rfloor]} = \emptyset$, which finishes the proof of this case.

There remains to treat the case $q> p^3/2.$ We have also $E_{j,\lfloor q/p \rfloor} = E_{j,2p} \cup E_{j,(2p,\lfloor q/p \rfloor]}$. There exist primes $r_k, k=1,2,3,4$ such that $r_1,r_2\in (q/8,q/2), r_3\in (p^2/2,p^2), r_4\in (p^3/16,p^3), r_4\not= q,r_1,r_2$. Plainly we also have $r_3<r_1,r_2,r_4$. Then
$$
E_{j,pq} \supset E_{j,q} \cup r_{1}E_{j,p} \cup r_{2}E_{j,2p} \cup r_{3}E_{j,(2p,\lfloor q/p \rfloor]} \cup  r_4 E_{j,\lfloor q/p^2 \rfloor}
$$
and the sets on the right hand side are disjoint. The proof is the same except that if $m \in r_{3}E_{j,(2p,\lfloor q/p \rfloor]}$ then $mr_3> 2p p^2/2 >q$. The proof of the lemma is complete.
\end{proof}

\subsection{Numbers with three prime factors}
Up to now we proved Theorem \ref{Lajos} for integers with at most two prime divisors, such that the smaller is at most $53$. Unfortunately we did not found any meaningful generalization of Lemma \ref{ketprim} to integers with three prime divisors. By Table 1 the smallest prime factor of such a candidate is at least $19$. For each prime $29\le p \le 43$ we computed all integers, which are divisible by $p$, lie below the bound given in Table 1 and have three different prime divisors, which are at least $p$. Their number, $n(p)$ is given in Table \ref{T3}.

\begin{table}
\begin{tabular}{|c|c|c|c|c|c|c|c|c|}
\hline
$p$&  19& 23&  29& 31 & 37 & 41 & 43  \\
\hline
$n(p)$ & 4 & 18 & 65 & 216 & 513 & 1302 & 3097 \\
\hline
\end{tabular}\\[1.5 ex]
\caption{\label{T3}}
\end{table}

Fix $p=p_i$. For each candidates $n$ we computed $|B_{i-1,n}|-|E_{i-1,n}|$. For this purpose we used a variant of the wheel algorithm, see e.g. \cite{w}. In our case this listed efficiently the elements of $S_{i-1,n}$ because we know than they are divisible by $p_i$ and, on the other hand, relative prime to $2,3,5,7$. For each $m$ produced by the wheel algorithm we computed the $\gcd(m, \prod_{j=5}^{i-2}j)$. If this is not one, then $m$ does not belong to $S_{i-1,n}$, otherwise we added one to the counter of $|E_{i-1,n}|$ or $|B_{i-1,n}|$ according as $\gcd(m,n)=1$ or not. We found in each case that $|B_{i-1,n}|-|E_{i-1,n}|<0$, which means $n$ cannot be adjoined to $S_{i-1,n}$. The total computational time was some minutes on a notebook.

\begin{rem}The extension of the computation for larger values of $p$ is very time consuming. We performed the same procedure for $p=67$, in which case the number of candidates is $90338$. The total computational time on the same computer took about one day. The application of tools of the prime number theory saved lot of computer time and is much more elegant.
\end{rem}

\subsection{Numbers with large prime factors}
In the previous sections we proved Theorem \ref{Lajos} for integers such that their smallest prime factor is at most $47$.
\begin{proposition} Let $n_0=111546435$. Assume that for any $n$ with $n<n_0$ having a prime factor $\leq 37$, the partition is the expected one, i.e. \eqref{sejtes} holds. Suppose that $n<n_0$, such that $n$ is composed of primes $\geq 41$. Then $n$ belongs to the class $S_{i,n}$, where $p_i$ is the smallest prime divisor of $n$.
\end{proposition}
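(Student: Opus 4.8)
By the standing hypothesis together with the cases already settled, we may assume that $G(n-1)=\bigcup_{j} S_{j,n-1}$ is the expected partition, so that Lemma~\ref{hovategyuk} and the sets $B_{j,n},E_{j,n}$ of Lemma~\ref{Bjn} apply. The plan is to reduce the claim to a combinatorial injection $B_{j,n}\hookrightarrow E_{j,n}$ and to construct that injection using primes furnished by prime number theory.

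First I would discard the case when $n$ is prime (Corollary~\ref{newcor}(2)) and assume $n$ composite, whence $p_i\le n-1$ and $|S_{i,n-1}|\ge1$. By Lemma~\ref{hovategyuk}, $n$ is adjoined to $S_{i,n-1}$ provided $|B_{j,n}|-|E_{j,n}|<|S_{i,n-1}|$ for every $1\le j<i$ (the cases $j\ge i$ being already settled in Corollary~\ref{newcor}). As the right-hand side is positive, it suffices to prove the stronger statement $|B_{j,n}|\le|E_{j,n}|$, i.e. that at most half of $S_{j,n-1}$ are friends of $n$. Note that the sieve bounds of Lemma~\ref{Bjn} are useless here: their error term $2^{t+j-2}$ is exponential in $j$ (hence in $i$), while $n$ can be as small as $p_i^2$, so the main terms are swamped. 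This is exactly what forces a non-sieve argument.

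The combinatorial core is a clean decomposition. Writing each $m\in B_{j,n}$ uniquely as $m=c\,m^{*}$, where $c>1$ gathers the prime powers of $m$ supported on the primes dividing $n$ and $\gcd(m^{*},n)=1$, one checks that $p_j\mid m^{*}$ and that $m^{*}$ is coprime to $p_1\cdots p_{j-1}$, since every prime factor of $n$ exceeds $p_j$. Hence $B_{j,n}$ is the disjoint union, over all $c>1$ whose prime factors all divide $n$, of the blocks $c\cdot\mathcal E(\lfloor(n-1)/c\rfloor)$, where $\mathcal E(M)=\{x\le M:\ p_j\mid x,\ \gcd(x,p_1\cdots p_{j-1}n)=1\}$, while $E_{j,n}=\mathcal E(n-1)$. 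The governing ratio is $\sum_{c}1/c=\prod_{k=1}^{t}\frac{q_k}{q_k-1}-1$, which is below $1/9$ because every $q_k\ge41$ and $t\le4$ (the product of the five smallest primes at least $41$ already exceeds $n_0$); thus enemies outnumber friends by a wide margin and the injection has ample room.

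Finally I would realise the injection by replacing, in each block, the factor $c$ with a single prime $r_c$ satisfying $r_c\nmid n$, $r_c\notin\{p_1,\dots,p_{j-1}\}$ and $r_c\le c$, sending $m^{*}\mapsto r_c m^{*}\in E_{j,n}$; coprimality to $n$ and to $p_1\cdots p_{j-1}$ is then automatic and $r_c m^{*}\le n-1$. The primes $r_c$ have to be placed in pairwise separated intervals so that the images $r_c\,\mathcal E(\lfloor(n-1)/c\rfloor)$ are pairwise disjoint, exactly the separation by size and by the dividing prime used in the proof of Lemma~\ref{ketprim}; summing the blocks then gives $|B_{j,n}|=\sum_c|\mathcal E(\lfloor(n-1)/c\rfloor)|\le|\mathcal E(n-1)|=|E_{j,n}|$. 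The hard part, and the place where prime number theory enters, is guaranteeing that these replacement primes exist in the prescribed windows while the blocks stay disjoint: for large $c$ Bertrand's postulate is enough, but for the smallest $c$ (a single prime $q_k$, possibly as small as $41$ while $(n-1)/c$ is large) the windows are tight and an explicit estimate for primes in short intervals is required, organised by a case distinction on the relative sizes of the $q_k$ and of their powers.
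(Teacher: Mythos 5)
Your reduction to proving $|B_{j,n}|\le|E_{j,n}|$ for all $j<i$ is valid, and your decomposition of $B_{j,n}$ into the disjoint blocks $c\cdot\mathcal{E}(\lfloor (n-1)/c\rfloor)$, indexed by the integers $c>1$ composed of primes dividing $n$, is correct; so is the density observation $\sum_c 1/c<1/9$. The genuine gap is the final step: the prime-substitution map $c\,m^{*}\mapsto r_c m^{*}$ cannot in general be made injective across blocks, no matter how the primes $r_c$ are placed, so the collisions you defer to ``the hard part'' are not a technicality but an obstruction. Concretely, take $n=41^4=2825761$ and $j=12$, i.e. $p_j=37$ (the worst case $j=i-1$, which you must handle). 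For the block $c=41$ the constraints $r_c\le c$ (needed since the domain $\mathcal{E}(\lfloor(n-1)/41\rfloor)=\mathcal{E}(68920)$ contains elements of essentially maximal size), $r_c\nmid n$ and $r_c\notin\{p_1,\dots,p_{11}\}$ force $r_{41}=37$. Now let $r'=r_{1681}$ be any admissible prime for the block $c'=41^2$. If $r'\neq 37$, the element $m^{*}=37r'$ lies in $\mathcal{E}(68920)$ (it is $\le 68920$, divisible by $37$, coprime to $p_1\cdots p_{11}$ and to $41$), the element $m'^{*}=37^2=1369$ lies in $\mathcal{E}(1680)$, and the images coincide: $37\cdot(37r')=r'\cdot 1369\le 1369\cdot 1681<n-1$. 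If $r'=37$, the two blocks collide already at $37\cdot 37$. So every admissible choice produces a collision; no placement of $r_c$ in ``separated windows'' can rescue injectivity. Note that this happens exactly in the small-$n$ regime ($n$ close to $q_1^2,q_1^4$) which you correctly identified as the reason the sieve bounds of Lemma~\ref{Bjn} are useless, so the gap sits at the heart of the problem rather than at its periphery.

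It is instructive to compare this with what the paper actually does. The paper never attempts one uniform injection: it exploits the bound of at most four prime factors to enumerate the possible shapes of friends of $n$ in $S_{\ell,n}$, matches only the shape $p_\ell q_ir_1r_2$ against quadruples of enemies, and then compares raw counts --- the remaining friends number at most $52+18\pi(n_0/p_\ell q_1)$, while the enemies of the single shape $p_\ell q$ number at least $\pi(n_0/p_\ell)-\pi(p_\ell)-4$ --- via a Rosser--Schoenfeld estimate (Lemma~\ref{lemnew1}), with a separate trivial argument when $p_\ell>\sqrt[3]{n_0}$. Where the paper does use your substitution idea (Lemma~\ref{ketprim}), it mixes multiplication by large primes with \emph{division} (mapping the block $pE_{j,q}$ into $E_{j,q}$ itself) and separates images by size ranges, and even then the argument is split into cases ($q<p^2$, $4p^2<q\le p^3/2$, $q>p^3/2$) precisely to avoid collisions of the kind exhibited above. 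To salvage your plan you would need either to bound the total number of collisions and show it is below the slack $|E_{j,n}|-|B_{j,n}|$ (which requires lower bounds on $|E_{j,n}|$, i.e. prime-counting input comparable to the paper's), or to abandon the uniform recipe and argue case by case; as written, the key step fails.
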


To prove the Proposition, we need the following lemma.

\begin{lemma}
\label{lemnew1}
Let $x\geq n_0^{2/3}$ and $t\geq 41$ be real numbers. Then we have
$$
\pi(x)-\pi(\sqrt{x})>18\pi(x/t)+56.
$$
\end{lemma}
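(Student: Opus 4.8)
The plan is to prove the inequality
$$
\pi(x)-\pi(\sqrt{x})>18\pi(x/t)+56
$$
by reducing it to an explicit form and then invoking effective (non-asymptotic) bounds on $\pi$ of Rosser--Schoenfeld type. First I would observe that the worst case for the right-hand side is $t=41$, since $\pi(x/t)$ is monotone decreasing in $t$; hence it suffices to prove
$$
\pi(x)-\pi(\sqrt{x})>18\pi(x/41)+56
$$
for all real $x\geq n_0^{2/3}$. Note that $n_0^{2/3}\approx 2.3\cdot10^{5}$, so we are safely in a range where sharp explicit estimates apply.

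The heart of the argument is to bound $\pi(x)$ from below and $\pi(x/41)$ from above using classical effective inequalities. Concretely, I would use bounds of the shape
$$
\frac{x}{\log x}\left(1+\frac{1}{\log x}\right)<\pi(x)<\frac{x}{\log x}\left(1+\frac{1}{\log x}+\frac{c}{(\log x)^2}\right)
$$
valid for $x$ beyond a modest threshold. Applying the lower bound to $\pi(x)$, the trivial estimate $\pi(\sqrt{x})\leq\sqrt{x}$ for the subtracted term, and the upper bound to $\pi(x/41)$, the claimed inequality becomes an explicit comparison between two expressions involving only $x$ and $\log x$. The asymptotic reason the inequality holds is transparent: $\pi(x)\sim x/\log x$ while $18\pi(x/41)\sim 18(x/41)/\log(x/41)\approx (18/41)\,x/\log x$, and $18/41<1$, so the left-hand side dominates by a factor tending to roughly $1-18/41=23/41$ times $x/\log x$, which grows without bound. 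The subtracted $\pi(\sqrt{x})$ and the additive constant $56$ are lower-order and absorbed easily.

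The main obstacle is making this rigorous across the entire range $x\geq n_0^{2/3}$ rather than merely for large $x$, because the effective bounds on $\pi$ carry error terms that are only negligible once $\log x$ is reasonably large. I would therefore split the range at a convenient point: for $x$ above some explicit threshold $x_0$ (say $x_0=10^6$ or wherever the Rosser--Schoenfeld bounds become sharp enough), the inequality follows from the analytic comparison above after checking that the resulting function of $\log x$ is positive and increasing; for the finite intermediate interval $n_0^{2/3}\leq x<x_0$, I would verify the inequality by a direct computation, since $\pi(x)$, $\pi(\sqrt{x})$ and $\pi(x/41)$ are all computable step functions and it suffices to check the finitely many values of $x$ at which any of them jumps. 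The delicate point is choosing the threshold $x_0$ so that the analytic estimate is genuinely valid and the residual computation stays small; with $18/41$ comfortably below $1$ there is ample slack, so a generous constant in the upper bound for $\pi(x/41)$ and a crude lower bound for $\pi(x)$ will suffice.
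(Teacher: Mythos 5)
Your proposal is correct and follows essentially the same route as the paper: the paper's proof also invokes the Rosser--Schoenfeld explicit bounds on $\pi(x)$ and then concludes by ``a simple calculation,'' which is precisely the comparison you carry out (worst case $t=41$, lower bound on $\pi(x)$, upper bound on $\pi(x/41)$, with $18/41<1$ providing the slack). Your version merely spells out the details the paper leaves implicit, including the harmless trivial bound $\pi(\sqrt{x})\leq\sqrt{x}$ and a fallback finite check on the initial range.
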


\begin{proof} By a classical result of Rosser and Schoenfeld \cite{rs} we have
$$
{\frac{x}{\log x}}\left(1+{\frac{1}{2\log x}}\right)<\pi(x)<
{\frac{x}{\log x}}\left(1+{\frac{3}{2\log x}}\right)
$$
for $x\geq 59$. Hence the statement easily follows by a simple calculation.
\end{proof}

\begin{proof}[Proof of the Proposition] First observe that $n$ cannot have more than four prime divisors, counted with multiplicity. Write $n=q_1\cdots q_I$ with $2\leq I\leq 4$, $41\leq q_1\leq \dots \leq q_I$. Take an arbitrary prime $p_\ell$ with $37\leq p_\ell < q_1$. We show that $n$ cannot be put into the class $S_{\ell,n}$. For this it is sufficient to show that $n$ has more enemies in $S_{\ell,n}$ than friends.

First observe that the elements of $S_{\ell,n}$ have at most four prime divisors, counted with multiplicity. Hence the friends of $n$ in this class have the form
$$
p_\ell q_ir_1r_2,\ p_\ell q_iq_jr_1,\ p_\ell^2 q_ir_1,\ p_\ell q_ir_1,
$$
$$
p_\ell q_i,\ p_\ell q_iq_j,\ p_\ell q_iq_jq_k,\ p_\ell^2 q_i,\ p_\ell^2 q_iq_j,\ p_\ell^3q_i
$$
where $r_1$ and $r_2$ are primes $>p_\ell$ distinct from
$q_1,\dots,q_I$, and the indices $i,j,k$ may be equal.

Observe that if $p_\ell q_ir_1r_2$ is a friend of $n$, then
$$
p_\ell r_1r_2,p_\ell^2 r_1r_2,p_\ell^2 r_1,p_\ell^3 r_1\in S_{\ell,n}
$$
are distinct enemies of $n$. We show that the number of other friends of $n$ is smaller than the number of enemies of $n$ in $S_{\ell,n}$ of the form $p_\ell q$ where $q$ is a prime greater than $p_\ell$. For this first note that the remaining friends of $n$ are of the form $qp_\ell P$ or $p_\ell P$, where $P$ is a product of a power of $p_\ell$ and of $q_i$-s, and $q$ is a prime $>p_\ell$.
The number of friends of the form $p_\ell P$ is at most $52$, since $P$ can be
$$
q_i,\ p_\ell q_i,\ p_\ell^2q_i,\ q_i^2,\ p_\ell q_i^2,\ q_iq_j,\ p_\ell q_iq_j,\ q_i^3,\ q_i^2q_j,\ q_iq_j^2,\ q_iq_jq_k
$$
with $1\leq i<j<k\leq 4$, while for the friends of the form $qp_\ell P$ we have $q\leq n_0/p_\ell q_1$, and here $P$ may take at most the $18$ values
$$
q_i,\ \ \ p_\ell q_i,\ \ \ q_i^2,\ \ \ q_iq_j\ \ \ (1\leq i<j\leq 4).
$$
Hence the number of friends of $n$ in $S_{\ell,n}$ of the latter form is at most $18\pi(n_0/p_\ell q_1)$. On the other hand, the number of enemies of $n$ in $S_{\ell,n}$ of the form $qp_\ell$ is at least $\pi(n_0/p_\ell)-\pi(p_\ell)-4$.

Set $x:=n_0/p_\ell q_1$ and $t:=q_1$. If we further suppose that $p_\ell\leq \sqrt[3]{n_0}$, then by Lemma \ref{lemnew1} we have
$$
\pi(n_0/p_\ell)-\pi(p_\ell)\geq \pi(n_0/p_\ell)-\pi(\sqrt{n_0/p_\ell})
>18\pi(n_0/p_\ell q_1)+56
$$
and the statement follows.

So we are left with the case $p_\ell>\sqrt[3]{n_0}$. Then obviously $n=q_1q_2$ with $p_\ell<q_1\leq q_2$. However, then obviously, $n$ has the only friends
$$
p_\ell q_1,\ \ \ p_\ell q_2
$$
in $S_{\ell,n}$. Since $p_\ell$ and $p_\ell^2$ are enemies of $n$ in $S_{\ell,n}$, and all the elements of $S_{i,n}$ with $p_i=q_1$ are friends of $n$, the statement also follows in this case.
\end{proof}


\begin{thebibliography}{00}
\bibitem{ab} M. Bak\'o and L. Aszal\'os and \textit{Combinatorial optimization methods for correlation clustering},
In: Coping with complexity/D. Dumitrescu, Rodica Ioana Lung, Ligia Cremene, Casa Cartii de Stiinta, Cluj-Napoca, 2-12, 2011.

\bibitem{bbc} N. Bansal, A. Blum and S. Chawla,
\textit{Correlational clustering}, Machine Learning, \textbf{56} (2004), 89--113.

\bibitem{ny} J.E. Nymann, \textit{On the probability that $k$
positive integers are relatively prime}, J. Number Theory \textbf{4} (1972), 469--473.

\bibitem{rs} J.B. Rosser and L. Schoenfeld, \textit{Approximate
formulas for some functions of prime numbers}, Illinois J. Math. \textbf{6} (1962), 64--94.

\bibitem{w} H.C. Williams, \textit{Primality testing on a computer}, Ars Combin. \textbf{5} (1978), 127--185.
\end{thebibliography}
\end{document}